\documentclass[11pt]{article}
\usepackage{amsmath,amssymb,bm,amsthm,array,tikz,enumerate}
\usepackage{url}
\usepackage{lscape}
\usepackage[margin=30truemm]{geometry}
\usepackage{float}
\usepackage{makecell}
\usepackage{blkarray}
\usepackage{multirow}
\usepackage{changepage}

\usetikzlibrary{positioning,graphs}

\numberwithin{equation}{section}

\DeclareMathOperator{\Spec}{Spec}

\DeclareMathOperator{\tr}{tr}

\newcommand{\MC}[1]{\mathcal{#1}}
\newcommand{\MB}[1]{\mathbb{#1}}

\newcommand{\BM}[1]{{\bm #1}}

\newcommand{\one}{{\bm 1}}
\newcommand{\mapsfrom}{\mathrel{\reflectbox{$\mapsto$}}}

\newcommand{\TR}{\textcolor{red}}
\newcommand{\TB}{\textcolor{blue}}

\newtheorem{thm}{Theorem}[section]
\newtheorem{lem}[thm]{Lemma}

\newtheorem{cor}[thm]{Corollary}

\newtheorem{claim}{Claim}

\theoremstyle{definition}

\allowdisplaybreaks

\title
{
Spectral characterization of the uniform theta graph $\Theta(t,2)$ and classification of 6-periodic Grover walks
}

\author{
Sho Kubota\thanks{
Department of Mathematics Education,
Aichi University of Education,
1 Hirosawa, Igaya-cho, Kariya, Aichi 448-8542, Japan.
\texttt{skubota@auecc.aichi-edu.ac.jp}}
}
\date{}

\begin{document}
\maketitle

\begin{abstract}
We characterize the uniform theta graph $\Theta(t,2)$ by the spectrum of its normalized adjacency matrix, or equivalently, by the spectrum of its normalized Laplacian matrix.
We also investigate the periodicity of Grover walks on nonregular graphs, which is closely related to the eigenvalues of the normalized adjacency matrix and those of the time evolution matrix of the Grover walk.
We show that the Dutch windmill graph $D_n^{(t)}$ is $2n$-periodic and that the uniform theta graph $\Theta(t,n)$ is $(2n+2)$-periodic.
Furthermore, we completely determine the connected $6$-periodic graphs and prove that they are precisely $D_3^{(t)}$ with $t \geq 2$ and $\Theta(t,2)$ with $t \geq 1$.
\vspace{8pt} \\
{\it Keywords:} spectral characterization, normalized adjacency matrix, normalized Laplacian matrix, uniform theta graph, periodicity, Grover walk \\
{\it MSC 2020 subject classifications:} 05C50; 81Q99
\end{abstract}

\section{Introduction}

Determining to what extent the spectrum of a matrix associated with a graph determines the structure of the graph is one of the fundamental problems in spectral graph theory.
In particular, the question of whether a graph is uniquely determined, up to isomorphism, by the spectrum of an associated matrix has been widely studied.
The origin of this problem goes back to the work of G\"unthard and Primas in 1956~\cite{gunthard1956zusammenhang}.
Since then, spectral characterization has been studied for various families of graphs.
For background and known results, we refer to the surveys by van Dam and Haemers~\cite{van2003graphs, van2009developments} and the recent survey~\cite{sason2025spectral}.

Let $A$ be the adjacency matrix of a graph $G$, and let $D$ be its degree matrix.
One important matrix associated with a graph in this paper is the normalized adjacency matrix $T(G) := D^{-1/2} A D^{-1/2}$.
From the viewpoint of eigenvalues, this is essentially equivalent to considering the normalized Laplacian matrix $\MC{L}(G) := I - T(G) = I - D^{-1/2}AD^{-1/2}$, where $I$ is the identity matrix.
Although not as extensively studied as for the adjacency matrix, spectral characterization of graphs with respect to the normalized Laplacian matrix has been studied in many papers.
To the best of our knowledge, one of the early studies in this direction is due to van Dam and Omidi~\cite{van2011graphs}.
Since then, the characterization of graphs by their normalized Laplacian spectra has been studied from several different perspectives.
For example, graphs with a small number of distinct normalized Laplacian eigenvalues have been studied in~\cite{braga2015trees, huang2019graphs, li2020split}, while graphs satisfying conditions on the values or multiplicities of specific eigenvalues have been investigated in~\cite{li2014bounds, sun2024characterization, sun2019second, sun2026spectral, tian2021full, zhao2025solving}.
Furthermore, spectral characterization has also been studied for specific graph families, including threshold graphs~\cite{banerjee2017normalized}, generalized friendship graphs~\cite{berman2018family}, and complete multipartite graphs~\cite{sun2020normalized}.
From a different point of view, Butler and Grout~\cite{butler2011construction} computationally investigated nonisomorphic simple graphs with at most nine vertices and counted the graphs having a normalized Laplacian cospectral mate.
Table~1 in \cite{butler2011construction} is particularly interesting because, judging from the computational results for graphs with a small number of vertices, most graphs are determined by the normalized Laplacian spectrum, whereas only a limited number of families of graphs have been theoretically proved to be determined by their normalized Laplacian spectra.


On the other hand, eigenvalue analysis of matrices associated with graphs is also directly useful for the analysis of quantum walks on graphs.
As described in Subsection~\ref{0824-1}, the eigenvalues of the time evolution matrix $U(G)$ of the Grover walk, a discrete-time quantum walk defined only from the structure of the graph, are known to be described in terms of the eigenvalues of the normalized adjacency matrix $T(G)$.
Therefore, problems concerning Grover walks are often reduced to problems in spectral graph theory.
In this paper, we focus on the periodicity of the Grover walk as one such problem.
The periodicity of quantum walks is also expected to have applications in quantum cryptography~\cite{panda2021order, rath2026quantum}.
If there exists a positive integer $\tau$ such that $U(G)^\tau = I$, then the smallest such $\tau$ is called the period of the Grover walk, and $G$ is said to be $\tau$-periodic.
The problem considered here is to determine all graphs $G$ that are $\tau$-periodic for a given positive integer $\tau$.
For graphs that are not necessarily regular, this problem has been solved for $\tau = 2, 4$~\cite{yoshie2017characterizations} and for odd $\tau$~\cite{yoshie2023odd}.
On the other hand, for connected regular graphs, this problem has been studied in~\cite{kubota2025regular}, where it was shown in particular that if $\tau \equiv 2,10 \pmod{12}$, then the only connected $\tau$-periodic regular graph is the cycle graph $C_{\tau}$.

Based on the above background, we present three main results in this paper.
Our first result provides two families of nonregular graphs that induce periodic Grover walks.
Although there have been many studies on the periodicity of Grover walks~\cite{bhakta2024grover, bhakta2025periodicity, higuchi2017periodicity, kubota2022bipartite, yoshie2019periodicities}, most of them have focused on regular graphs.
In this paper, we show that two families of nonregular graphs, the Dutch windmill graphs and the uniform theta graphs, induce periodic Grover walks.
Moreover, their periodicity is proved without using eigenvalue analysis, by directly tracing the time evolution of the Grover walk.
Detailed definitions of the Dutch windmill graphs and the uniform theta graphs are given in Section~\ref{S3}.

\begin{thm}
The Dutch windmill graph $D_n^{(t)}$ is $2n$-periodic for integers $t \geq 2$ and $n \geq 3$, and the uniform theta graph $\Theta(t,n)$ is $(2n+2)$-periodic for positive integers $t$ and $n$.
\end{thm}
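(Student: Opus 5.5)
The plan is to prove both periodicity statements by following the Grover walk directly on the arcs, not by computing eigenvalues. We use the standard form of the time evolution: for an arc $a$ with terminus $v=t(a)$,
\[
U\delta_a=\frac{2}{\deg v}\sum_{b:\,o(b)=v}\delta_b-\delta_{a^{-1}} .
\]
The basic observation concerns a vertex of degree $2$: there the coefficient is $2/2-1=0$ on the reversed arc and $1$ on the other outgoing arc. So an arc entering a degree-$2$ vertex is transmitted to the next arc without change. Likewise, at a degree-$1$ vertex the arc is reflected perfectly. Consequently, along any path whose internal vertices have degree $2$, a unit amplitude simply moves one arc per step. The only nontrivial mixing happens at the vertices of higher degree. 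Since $U$ is linear, it suffices to show $U^{\tau}\delta_a=\delta_a$ for every arc $a$, and then to check minimality.

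For $\Theta(t,n)$, we take the definition of Section~\ref{S3}: two hubs $u,v$ of degree $t$ joined by $t$ internally disjoint paths. The period $2n+2$ indicates these paths have $n+1$ edges each, which is what the case $t=1$ (a path, period twice its length) also suggests. We index the $t$ arcs entering a hub by the paths, and record the state at a hub as a vector $x\in\mathbb{C}^t$.
\begin{itemize}
\item The coin at a hub sends $x$ to $(\tfrac{2}{t}J-I)x$ on the outgoing arcs. Here the term $-\delta_{a^{-1}}$ puts the output back on the same path.
\item Transport through the degree-$2$ interior takes exactly $n+1$ steps and carries the outgoing arc of path $j$ at one hub to the incoming arc of path $j$ at the other hub.
\item Hence one ``round'' of $n+1$ steps acts on these vectors by $M=\tfrac{2}{t}J-I$. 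Since $J^2=tJ$, we get $M^2=\tfrac{4}{t^2}tJ-\tfrac{4}{t}J+I=I$.
\item An arbitrary arc reaches a hub-entering position after fewer than $n+1$ steps. Because $U^{2n+2}$ commutes with $U$, it is enough to treat the arcs entering hubs. Therefore $U^{2n+2}=I$.
\end{itemize}

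For $D_n^{(t)}$ the argument is the same. There is one center $c$ of degree $2t$, and each of the $t$ cycles $C_n$ contributes two arcs entering $c$, one on each side. The coin at $c$ is $\tfrac{1}{t}J-R$, where $R$ reverses an incoming arc to the outgoing arc on the same side of the same cycle. Transport around a cycle takes $n$ steps and sends that outgoing arc to the incoming arc on the opposite side of the same cycle. So one round of $n$ steps acts by $M=\tfrac{1}{t}J-Q$, where $Q$ is the involution swapping the two sides within each cycle. Since $Q$ permutes the arcs, $QJ=JQ=J$, and $J^2=2tJ$. Hence
\[
M^2=\tfrac{2t}{t^2}J-\tfrac{2}{t}J+Q^2=I,
\]
which gives $U^{2n}=I$.

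The step requiring the most care is minimality, i.e.\ that no proper divisor of $2n$ (respectively $2n+2$) works. We track a single arc $a$ that leaves the high-degree vertex. For $k<n$ (resp.\ $k<n+1$), $U^k\delta_a$ is a single arc different from $a$. After one round the state is $M e_j$, which has more than one nonzero entry when $t\ge2$ and $n\ge3$ (resp.\ $t\ge2$). Every intermediate step of the second round is just a transport of such a spread vector, so it cannot equal $\delta_a$ before step $2n$ (resp.\ $2n+2$).

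The degenerate cases are handled separately. For $\Theta(1,n)$, a path with $n+1$ edges, the walk reflects at both ends, and the arc returns only after $2n+2$ steps. For $\Theta(2,n)$, $M=J-I$ is the swap matrix, so a unit amplitude still moves as a single arc, and it first returns to $a$ at step $2n+2$. In all cases we must verify that the traversal bookkeeping identifies incoming arcs on path or cycle $j$ consistently with the coin's reversal term; this is precisely what makes the round map equal to $\tfrac{2}{t}J-I$ (resp.\ $\tfrac1tJ-Q$).
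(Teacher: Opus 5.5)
Your proposal is correct and takes essentially the same approach as the paper: trace a unit amplitude from a hub-entering arc through the coin at the hub and the lossless transport along degree-$2$ vertices, show it returns after two rounds, then reduce arbitrary arcs to hub-entering ones; you encode the round as $M=\tfrac{2}{t}J-I$ (resp.\ $\tfrac{1}{t}J-Q$) with $M^2=I$ where the paper draws the weights, and your explicit minimality check makes precise what the paper leaves to its figures. Two small slips to fix: transport from an outgoing hub arc to the next incoming hub arc takes $n$ steps in $\Theta(t,n)$ (resp.\ $n-1$ in $D_n^{(t)}$), so a round including the coin step has $n+1$ (resp.\ $n$) steps; and the spreading claim for $\Theta(t,n)$ needs $t\geq 3$ rather than $t\geq 2$, since for $t=2$ the coin output is a single arc (a case you do treat separately).
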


The second main result is a complete classification of connected $6$-periodic graphs.
This classification result itself is obtained by combining the characterization by Jost et al.~\cite{jost2023petals} concerning the spectral gap from $1$ of the normalized Laplacian matrix with the spectral mapping theorem for Grover walks.
Therefore, the structural characterization of graphs in this result is based on the result of Jost et al., but the role of this paper is to establish a connection between their result and the periodicity of the Grover walk.
Details are given in Section~\ref{S4}.

\begin{thm}
The only connected graphs that induce a 6-periodic Grover walk are the Dutch windmill graphs $D_3^{(t)}$ with $t\geq 2$ and the uniform theta graphs $\Theta(t,2)$ with $t\geq 1$.
\end{thm}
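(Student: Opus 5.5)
We prove Theorem 1.2 by translating 6-periodicity into a condition on the spectrum of $T(G)$ via the spectral mapping theorem of Subsection~\ref{0824-1}, and then invoking the classification of Jost et al.~\cite{jost2023petals}. The reverse inclusion is already given by Theorem 1.1: taking $n=3$ shows $D_3^{(t)}$ ($t\ge 2$) is $6$-periodic, and taking $n=2$ shows $\Theta(t,2)$ ($t \ge 1$) is $6$-periodic. It remains to show that every connected $6$-periodic graph belongs to one of these two families.

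\textbf{Step 1: reduce to a spectral condition.} Let $G$ be connected and $6$-periodic. By the spectral mapping theorem, each eigenvalue $\lambda$ of $T(G)$ contributes eigenvalues $e^{\pm i\theta}$ of $U(G)$ with $\cos\theta=\lambda$. The remaining eigenvalues of $U(G)$ are $\pm1$, and they come from the cycle space. Since $U(G)$ is unitary, $U(G)^6=I$ holds exactly when every eigenvalue is a 6th root of unity. Hence $\lambda=\cos\theta\in\{1,\tfrac12,-\tfrac12,-1\}$ for every eigenvalue of $T(G)$. Equivalently, every normalized Laplacian eigenvalue lies in $\{0,\tfrac12,\tfrac32,2\}$. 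In particular, no normalized Laplacian eigenvalue lies in the open interval $(1/2,3/2)$.

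\textbf{Step 2: apply Jost et al.} Their result says that a connected graph whose normalized Laplacian spectrum avoids $(1/2,3/2)$ must be one of an explicit list: petal graphs, i.e.\ triangles glued at a vertex, which are $D_3^{(t)}$; graphs of the form $\Theta(t,2)$-type, i.e.\ complete bipartite $K_{2,t}$ with its two hubs; and possibly a few small exceptional graphs. I would recall the precise statement of their theorem and match each family in it against the two families here. The main point to check is that graphs with spectral gap exactly $1/2$ around $1$ are precisely these, with $K_2$ and $K_3$ appearing as degenerate members.

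\textbf{Step 3: handle boundary and exceptional cases.} We must discard each listed graph that is not $6$-periodic. For instance, $K_2$ has $U^2=I$ and so has period $2$, not $6$. For each remaining exceptional graph, I would either compute its $T$-spectrum and exhibit an eigenvalue outside $\{\pm1,\pm\tfrac12\}$, or note that the eigenvalues $\pm1$ arising from the cycle space already force a period dividing $2$ or $3$. One must also confirm that $6$ is the \emph{smallest} period. This holds for the two families because the eigenvalue $\tfrac12$ or $-\tfrac12$ genuinely occurs, giving a primitive root of unity of order $6$ or $3$, together with $-1$.

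The main obstacle is Step 2: stating the Jost et al.\ classification exactly, including its boundary conventions (open versus closed interval, and whether $\Theta(t,2)$ with $t=1$, which is $C_4$, is included), so that the two lists match precisely.
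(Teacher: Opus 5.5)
Your skeleton is the paper's. Theorem~\ref{0820-1} forces $\Spec(T(G))\subset\{\pm1,\pm\frac12\}$; you then pass to $\MC{L}=I-T$, invoke Jost et al., and get the converse from Theorems~\ref{0722-2} and~\ref{0722-3}. Step~1 is correct. Deducing $\varepsilon=\frac12$ from ``$\Spec(\MC{L})$ avoids $(\frac12,\frac32)$'' plus Jost's bound $\varepsilon\le\frac12$ is also a valid substitute for the paper's direct argument. The paper instead shows $-\frac12\in\Spec(T)$ from $\tr T=0$, simplicity of the eigenvalue $1$, multiplicity of $-1$ at most one, and $|V|\ge 3$.

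The problem is Step~2. It carries the entire proof, and you have misremembered it in ways that break the argument as written.
\begin{itemize}
\item The book graph is not $K_{2,t}$. In fact $K_{2,t}=\Theta(t,1)$ has $\Spec(T)=\{1,0^{(t)},-1\}$, so $1\in\Spec(\MC{L})$ and $\varepsilon=0$; it is $4$-periodic.
\item $\Theta(1,2)$ is $P_4$, not $C_4$. Moreover $C_4=K_{2,2}$ again has $T$-eigenvalue $0$.
\item Taken at face value, your gloss would make Step~3 discard $K_{2,t}$ and leave no place for $\Theta(t,2)$ with $t\ge3$. You yourself cite that graph as $6$-periodic, so the list you wrote cannot be the classification.
\item $K_2$ is not a degenerate member of the equality case. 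Since $\Spec(\MC{L}(K_2))=\{0,2\}$, it has $\varepsilon=1$, which is exactly why the theorem assumes $|V|\ge 3$.
\end{itemize}

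The statement you need is Theorem~\ref{0722-1}. For connected $G$ with $|V|\ge 3$ one has $\varepsilon\le\frac12$, with equality if and only if $G\cong D_3^{(t)}$ or $G\cong\Theta(t,2)$ for some $t\ge 1$. Here $\Theta(t,2)$ consists of $t$ internally disjoint paths of length $3$ joining two nonadjacent vertices, so it has $2t+2$ vertices and $3t$ edges. There are no further exceptional graphs.

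With this, Step~3 reduces to two checks. The case $|V|\le 2$ is treated separately: $P_2$ is $2$-periodic. And $D_3^{(1)}=C_3$ must be removed because it is $3$-periodic. All remaining members of both families are $6$-periodic by Theorems~\ref{0722-2} and~\ref{0722-3}, and your proof then coincides with the paper's.
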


The third main result is a spectral characterization of the uniform theta graph $\Theta(t,2)$ by its normalized adjacency spectrum.
This result is motivated by the aim of classifying $6$-periodic graphs more directly without using the result of Jost et al.
By the spectral mapping theorem, classifying connected $6$-periodic graphs is reduced to determining connected graphs $G$ satisfying
\[ \Spec(T(G)) \subset \left\{ \pm 1, \pm \frac{1}{2} \right\}. \]
If $G$ is non-bipartite, then the result of Berman et al.~\cite{berman2018family} implies that $G$ is a friendship graph, or equivalently, a Dutch windmill graph $D_3^{(t)}$ with $t \geq 2$ in our notation.
On the other hand, if $G$ is bipartite, then its spectrum has the form
\[ \Spec(T(G)) = \left\{ 1^{(1)}, \frac{1}{2}^{(t)}, -\frac{1}{2}^{(t)}, -1^{(1)} \right\}. \]
In this paper, we characterize the connected graphs with this spectrum and obtain the following result.

\begin{thm}
Let $G = (V, E)$ be a connected graph and let $t \geq 1$ be an integer.
Then, $G$ is isomorphic to the uniform theta graph $\Theta(t,2)$ if and only if
\[ \Spec(T(G)) = \left\{ 1^{(1)}, \frac{1}{2}^{(t)}, -\frac{1}{2}^{(t)}, -1^{(1)} \right\}. \]
\end{thm}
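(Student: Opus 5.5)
We prove the two directions separately. Throughout, we read $\Theta(t,2)$ as the graph obtained by joining two hub vertices by $t$ internally disjoint paths of length $3$. This reading is consistent with the vertex count $2t+2$ forced by the spectrum. It gives $P_4$ for $t=1$ and $C_6$ for $t=2$, and both have the stated spectrum.

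\emph{Sufficiency.} For the ``only if'' direction we compute $\Spec(T(\Theta(t,2)))$ directly. The hubs have degree $t$ and the $2t$ internal vertices have degree $2$. The Perron vector $D^{1/2}\one$ gives eigenvalue $1$, and bipartiteness gives $-1$. We split the remaining space into two parts.
\begin{itemize}
\item Vectors supported on the internal vertices that are antisymmetric between two paths span a space of dimension $2(t-1)$. On each path the internal edge acts with $T$-weight $1/2$, so these vectors give eigenvalues $\pm\frac12$, each with multiplicity $t-1$.
\item Vectors that are symmetric over the paths reduce to the quotient path $P_4$. On $P_4$ the normalized adjacency spectrum is $\{\pm1,\pm\frac12\}$, which adds one more copy of each of $\pm\frac12$.
\end{itemize}
Counting dimensions, $2+2(t-1)+2=2t+2$, so this accounts for the whole spectrum and the multiplicities come out as claimed.

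\emph{Necessity: first consequences of the spectrum.} Let $P=D^{-1}A$, which is similar to $T$. The given spectrum yields the following.
\begin{itemize}
\item $|V|=2t+2$.
\item $G$ is bipartite, since $-1$ is an eigenvalue of the connected graph $G$.
\item $G$ has diameter at most $3$, since $T$ has four distinct eigenvalues.
\item The minimal polynomial identity $(T^2-I)(T^2-\tfrac14 I)=0$ holds, that is, $P^4=\tfrac54P^2-\tfrac14 I$.
\end{itemize}
We also use the trace identities
\[
\sum_{uv\in E}\frac{2}{d_ud_v}=\tr T^2 = 2+\frac t2,
\qquad
\tr T^4 = 2+\frac{t}{8}.
\]

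\emph{Necessity: local constraints.} We read the identity $P^4=\tfrac54P^2-\tfrac14 I$ entrywise. On the diagonal it states that for every vertex $u$,
\[
(P^4)_{uu}=\tfrac54 (P^2)_{uu}-\tfrac14,
\qquad\text{where } (P^2)_{uu}=\frac1{d_u}\sum_{v\sim u}\frac1{d_v}.
\]
Since $(P^4)_{uu}=\sum_w (P^2)_{uw}(P^2)_{wu}\ge ((P^2)_{uu})^2$ (keeping only the term $w=u$), this already forces $(P^2)_{uu}\le 1$ and restricts the possible values. For $u,w$ at distance $2$ the identity gives $(P^4)_{uw}=\tfrac54(P^2)_{uw}$. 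For $u,w$ at distance $3$, odd powers matter, so we also use $P^3=\tfrac54 P-\tfrac14 P^{-1}$ restricted to the bipartite blocks, or equivalently $P^5=\tfrac54P^3-\tfrac14P$.

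\emph{Necessity: recovering the structure.} From these constraints we aim to show, in order:
\begin{enumerate}
\item Every vertex has degree $2$ except at most two vertices, and any exceptional vertices lie on opposite sides of the bipartition.
\item Two degree-$2$ vertices that share a neighbour of degree $2$ violate the distance-$2$ identity. Hence every degree-$2$ vertex is adjacent to exactly one high-degree vertex, or the graph is a short path.
\item With $|V|=2t+2$ and diameter at most $3$, the only remaining configuration is two hubs of degree $t$ joined by $t$ paths of length $3$. The case $t=1$, which is $P_4$ with its two leaves, is handled separately.
\end{enumerate}

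\emph{Main obstacle.} We expect the main difficulty to be step 1, bounding the degrees and ruling out vertices of degree $1$ or of mixed large degree. Our first attempt will combine the diagonal identity at a maximum-degree vertex $u$ with the two trace identities. The goal is to show that $\sum_{v\sim u}1/d_v$ is forced and that equality holds in Cauchy--Schwarz-type estimates, so that all neighbours of $u$ have equal degree. We will then propagate this equal-degree conclusion through the graph using the distance-$2$ identities.
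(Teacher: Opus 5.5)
\paragraph{Verdict.} The forward direction is fine, but the converse is only a plan with its central step missing, so as written the proposal does not prove the theorem.

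\paragraph{What works.} Your direct computation of $\Spec(T(\Theta(t,2)))$ is correct. Splitting into vectors that are antisymmetric across the paths plus the $P_4$ quotient is a more self-contained route than the paper's argument through $6$-periodicity and the spectral mapping theorem.

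\paragraph{The gap.} Steps 1--3 are stated only as goals. Step 1, which you rightly call the whole difficulty, has no argument behind it.
\begin{itemize}
\item Reading $P^4=\frac54P^2-\frac14I$ entrywise forces you to control walks of length $4$ and $5$.
\item The only diagonal consequence you actually extract, $(P^2)_{uu}\le1$, holds for every graph.
\item Nothing in the plan explains how a maximum-degree vertex together with two global trace identities would determine the degrees.
\end{itemize}
Step 2 is also false as stated. $C_6=\Theta(2,2)$ has the required spectrum with $t=2$, and every vertex of $C_6$ has degree $2$ with degree-$2$ neighbours. Your escape clause covers only short paths, not this cycle, so step 2 needs an exception for small $m$. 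Step 3 would further have to exclude an edge between the two exceptional vertices. Neither the diameter bound nor $|V|=2t+2$ rules that edge out.

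\paragraph{The missing idea.} The $\pm1$-eigenspaces are one-dimensional and spanned by the explicitly known vectors $D^{1/2}\one$ and $(D_1^{1/2}\one_{V_1},-D_2^{1/2}\one_{V_2})$. Hence your quartic identity collapses to the quadratic one
\[ T^2=\tfrac14 I+\tfrac{3}{4m}\begin{bmatrix} D_1^{1/2}J_1D_1^{1/2} & O\\ O & D_2^{1/2}J_2D_2^{1/2}\end{bmatrix}. \]
Its entries involve only walks of length $2$:
\begin{itemize}
\item for every $x$, $\sum_{z\in N(x)}1/d_z=d_x/4+3d_x^2/(4m)$;
\item for distinct $x,y$ on the same side, $\sum_{z\in N(x)\cap N(y)}1/d_z=3d_xd_y/(4m)$.
\end{itemize}

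\paragraph{How the paper uses it.} The paper works from a \emph{minimum}-degree vertex $u$, not a maximum-degree one.
\begin{itemize}
\item The bound $\tr T^2=(n+6)/4\le n/\delta$ gives $\delta\le 3$.
\item $\delta=3$ is ruled out by the first identity at $u$ combined with the second identity for two neighbours of $u$.
\item $\delta=1$ forces $(d_p,m)\in\{(1,1),(2,3),(3,9)\}$ for the neighbour $p$ of the leaf $u$, and only $P_4$ survives.
\item For $\delta=2$, one shows that $u$ has a neighbour $p_1$ of degree $2$ and that its other neighbour has degree $m/3$.
\item The two identities then fix the degrees of all vertices within distance $2$ of $\{u,p_1\}$.
\item The handshake lemma shows there are no further vertices, so $G\cong\Theta(m/3,2)$.
\end{itemize}
Without this reduction, or an equivalent one, your step 1 remains a conjecture rather than a proof.
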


\section{Preliminaries}

See \cite{godsil2013algebraic} for basic terminology related to graphs.
Let $G =(V, E)$ be a graph with vertex set $V$ and edge set $E$.
Throughout this paper, we assume that graphs are simple and finite,
i.e., $|V| < \infty$ and $E \subset \{\{x,y\} \subset V \mid x \neq y\}$.
For a square matrix $M$, we denote by $\Spec(M)$ the multiset of eigenvalues of $M$, counted with algebraic multiplicities.
Define the \emph{adjacency matrix} $A = A(G) \in \MB{C}^{V \times V}$ by
\[ A_{x,y} = \begin{cases}
1 \quad &\text{if $\{x,y\} \in E$,} \\
0 \quad &\text{otherwise.}
\end{cases} \]
In this paper, the multiset of eigenvalues of $A(G)$ is called the \emph{adjacency spectrum} of $G$.
For example, it is well known that the adjacency spectrum of the cycle graph $C_3$ with three vertices is 
\[ \Spec(A(C_3)) = \left \{ 2^{(1)}, -1^{(2)} \right \}. \]
As shown here, the multiplicity of each eigenvalue is indicated by a parenthesized superscript.

The \emph{degree matrix} $D \in \MB{C}^{V \times V}$ of a graph $G=(V,E)$ is defined by $D_{x,y} = d_x\delta_{x,y}$, where $d_x$ denotes the degree of the vertex $x$ and $\delta_{x,y}$ denotes the Kronecker delta.
For a finite set $\Omega$, we denote by $\one_{\Omega}$ the all-ones vector in the vector space $\MB{C}^{\Omega}$.
When the context is clear, we simply write $\one$.

\begin{lem} \label{0821-1}
Let $G = (V, E)$ be a graph, and let $A$ and $D$ denote the adjacency matrix and the degree matrix of $G$, respectively.
Then, we have $A \one_V = D \one_V$.
\end{lem}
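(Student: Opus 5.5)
The plan is a direct entrywise comparison of the two vectors $A\one_V$ and $D\one_V$, indexed by the vertices $x \in V$. No earlier result is needed; the claim follows from the definitions of $A$ and $D$ given above.

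First, compute the $x$-entry of $A\one_V$. By definition of matrix–vector multiplication and of the adjacency matrix,
\[ (A\one_V)_x = \sum_{y \in V} A_{x,y} \cdot 1 = \left|\{ y \in V \mid \{x,y\} \in E \}\right|. \]
Because $G$ is simple, each neighbor $y$ of $x$ contributes exactly once: there are no loops and no multiple edges. This count is therefore precisely the degree $d_x$.

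Second, compute the $x$-entry of $D\one_V$. Since $D_{x,y} = d_x \delta_{x,y}$,
\[ (D\one_V)_x = \sum_{y \in V} d_x \delta_{x,y} = d_x. \]
The two entries agree for every $x \in V$, so $A\one_V = D\one_V$.

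There is no real obstacle here. The only point needing care is the identification of the row sum of $A$ with the degree, and that relies on the standing assumption in this paper that graphs are simple and finite.
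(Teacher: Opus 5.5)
Your proof is correct and is essentially the paper's argument. Both compute the $x$-entry of $A\one_V$ as the row sum $\sum_{z \in V} A_{x,z} = d_x$, and the $x$-entry of $D\one_V$ as $d_x$, directly from the definitions. Your remark that simplicity of $G$ is what makes the row sum equal the degree is a harmless extra justification that the paper leaves implicit.
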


\begin{proof}
Indeed, 
\[ (A \one_V)_x
= \sum_{z \in V} A_{x,z} \one_z
= \sum_{z \in V} A_{x,z}
= d_x,
\]
and 
\[ (D \one_V)_x = \sum_{z \in V} D_{x,z} \one_z = \sum_{z \in V} D_{x,z}
= d_x. \qedhere \]
\end{proof}

\subsection{Grover walks}

We introduce the Grover walk, one of the most fundamental discrete-time quantum walks on graphs.
In this paper, we define the time evolution matrix $U$ of the Grover walk by explicitly specifying its entries.
However, as explained later, it is more convenient to regard $U$ as a linear mapping and understand its action by tracking how it maps the standard basis vectors.

Let $G = (V, E)$ be a graph.
Define $\MC{A} = \MC{A}(G)=\{ (x, y), (y, x) \mid \{x, y\} \in E \}$, which is the set of \emph{symmetric arcs} of $G$.
The origin $x$ and terminus $y$ of $a=(x, y) \in \MC{A}$ are denoted by $o(a)$ and $t(a)$, respectively.
We write the inverse arc of $a$ as $a^{-1}$.
Define the \emph{time evolution matrix} $U = U(G) \in \MB{C}^{\MC{A} \times \MC{A}}$ by
\begin{equation} \label{1223-1}
U_{a,b} = \frac{2}{d_{t(b)}} \delta_{o(a), t(b)} - \delta_{a,b^{-1}}.
\end{equation}
The matrix $U$ is unitary.
The discrete-time quantum walk with the time evolution matrix $U$ is called the \emph{Grover walk} on $G$.
Note that Grover walks are also referred to as \emph{arc-reversal walks} or \emph{arc-reversal Grover walks}.
The derivation of the expression for the entries of $U$ in Equality~\eqref{1223-1} can be found in~\cite{kubota2021quantum}.
We regard the vector space $\MB{C}^{\MC{A}}$, whose coordinates are indexed by $\MC{A}$, as a Hilbert space with the standard inner product,
and denote its standard basis by $\{ \BM{e}_a \mid a \in \MC{A} \}$.
We call a vector $\psi \in \MB{C}^{\MC{A}}$ with $\| \psi \| = 1$ a \emph{quantum state}, or simply a \emph{state}.

\begin{lem}[{\cite[Lemmas~2.1 and~2.2]{kubota2026entanglement}}] \label{0507-1}
For $a \in \MC{A}$, we have
\[ U\BM{e}_a = \frac{2}{d_{t(a)}}\sum_{\substack{z \in \MC{A} \\ o(z) = t(a)}} \BM{e}_z - \BM{e}_{a^{-1}}, \]
and
\[ U^* \BM{e}_a = U^{-1} \BM{e}_a = \frac{2}{d_{o(a)}} \sum_{\substack{z \in \MC{A} \\ t(z) = o(a) }} \BM{e}_z - \BM{e}_{a^{-1}}. \]
\end{lem}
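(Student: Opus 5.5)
The plan is to verify both formulas directly from the entry formula~\eqref{1223-1}, reading off a column of $U$ and then a column of $U^*$.

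\textbf{Formula for $U\BM{e}_a$.} The vector $U\BM{e}_a$ is the $a$-th column of $U$, so its $z$-coordinate is
\[ U_{z,a} = \frac{2}{d_{t(a)}}\delta_{o(z),t(a)} - \delta_{z,a^{-1}}. \]
The first term is nonzero exactly for the arcs $z$ with $o(z)=t(a)$, and each such arc contributes $2/d_{t(a)}$. The second term contributes $-1$ only at $z=a^{-1}$. Summing $U_{z,a}\BM{e}_z$ over $z\in\MC{A}$ therefore gives the first formula.

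\textbf{Unitarity.} Since $U$ is real, $U^*=U^{\top}$. I would check unitarity by computing $U^{\top}U$ entrywise (unless the paper's earlier assertion that $U$ is unitary is simply taken as given). For arcs $b,c$,
\[ (U^\top U)_{b,c}=\sum_z U_{z,b}U_{z,c}. \]
\begin{itemize}
\item If $t(b)=t(c)=v$, the product of the two $2/d_v$ terms summed over the $d_v$ arcs leaving $v$ gives $4/d_v$.
\item The cross terms give $-2/d_v$ each, since $o(c^{-1})=t(c)$ and $o(b^{-1})=t(b)$.
\item The delta-delta term gives $\delta_{b,c}$.
\end{itemize}
Hence
\[ (U^\top U)_{b,c} = \frac{4}{d_v}-\frac{2}{d_v}-\frac{2}{d_v}+\delta_{b,c} = \delta_{b,c}. \]
If $t(b)\neq t(c)$, every cross term vanishes except possibly where both deltas agree, which forces $b=c$ and contradicts $t(b)\neq t(c)$. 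So $U^\top U=I$, which gives $U^*=U^{-1}$.

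\textbf{Formula for $U^*\BM{e}_a$.} The $z$-coordinate of $U^*\BM{e}_a$ is
\[ U_{a,z} = \frac{2}{d_{t(z)}}\delta_{o(a),t(z)} - \delta_{a,z^{-1}}. \]
On the support of the first delta we have $d_{t(z)}=d_{o(a)}$. The condition $a=z^{-1}$ is equivalent to $z=a^{-1}$. Summing over $z$ therefore yields the second formula.

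There is no real obstacle here. The only point needing care is the unitarity bookkeeping, specifically the case $t(b)=t(c)$ with $b\neq c$.
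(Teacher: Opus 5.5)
Your proof is correct. The paper gives no proof of this lemma and simply cites \cite{kubota2026entanglement}, so there is no argument of its own to compare against. Your route is the natural self-contained verification: read off the $a$-th column of $U$ and of $U^{\top}$ directly from \eqref{1223-1}, and check $U^{\top}U=I$ entrywise. For a square real matrix that one-sided identity already gives $U^{*}=U^{\top}=U^{-1}$.

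The only loosely worded step is the case $t(b)\neq t(c)$. There the $4/d$ term vanishes, and both cross terms vanish because $o(c^{-1})=t(c)\neq t(b)$ and $o(b^{-1})=t(b)\neq t(c)$. The one remaining term is $\delta_{b,c}=0$. Your conclusion is right; it is just cleaner to say it that way.
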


Using Lemma~\ref{0507-1}, the time evolution matrix $U$ can be understood visually by regarding it as a linear mapping.
We write the entries of a vector $\Psi \in \MB{C}^{\MC{A}}$
on the arcs of the graph as in Figure~\ref{48}.
If an entry of $\Psi$ is $0$,
we omit the arc itself corresponding to the entry.
\begin{figure}[ht]
\begin{center}
\begin{tikzpicture}
[scale = 0.7,
line width = 0.8pt,
v/.style = {circle, fill = black, inner sep = 0.8mm},u/.style = {circle, fill = white, inner sep = 0.1mm}]
  \node[u] (1) at (-1.2, 0) {};
  \node[v] (2) at (0, 0) {};
  \node[v] (3) at (3, 0) {};
  \node[u] (4) at (-1.2, 0.68) {};
  \node[u] (5) at (-1.2, -0.68) {};
  \node[u] (7) at (4.2, 0) {};
  \node[u] (8) at (4.2, 0.68) {};
  \node[u] (9) at (4.2, -0.68) {};
  \node[u] (12) at (3.3, 0.2) {};
  \node[u] (13) at (5.7, 0.2) {};
  \draw (0,-0.5) node{$x$};
  \draw (3,-0.5) node{$y$};
  \draw (1) to (2);
  \draw[-] (2) to (4);
  \draw[-] (2) to (3);
  \draw[-] (5) to (2);
  \node[u] (10) at (0.3, 0.2) {};
  \node[u] (11) at (2.7, 0.2) {};
  \draw[draw= blue,->] (10) to (11);
  \node[u] (20) at (0.3, -0.2) {};
  \node[u] (21) at (2.7, -0.2) {};
  \draw[draw= blue,->] (21) to (20);
  \draw[-] (3) to (7);
  \draw[-] (3) to (8);
  \draw[-] (3) to (9);
  \draw (1.5,0.6) node[blue]{$\Psi_{(x,y)}$};
  \draw (1.5,-0.6) node[blue]{$\Psi_{(y,x)}$};
\end{tikzpicture}
\caption{The entries of a vector written on the arcs of the graph} \label{48}
\end{center}
\end{figure}

Figure~\ref{0519-3} illustrates the actions of $U$ and $U^*$.
The middle diagram in Figure~\ref{0519-3} represents the standard basis vector $\BM{e}_{(x,y)}$ corresponding to the arc $(x,y) \in \MC{A}(G)$.
The right diagram represents $U \BM{e}_{(x,y)}$.
We see that the arrow of weight $1$ is transmitted to each non-opposite arrow with weight $\frac{2}{d_y}$, and is reflected onto the opposite arrow with weight $\frac{2}{d_y}-1$.
Similarly, the left diagram represents $U^*\BM{e}_{(x,y)}$.
It shows that the arrow of weight $1$ propagates backward with weight $\frac{2}{d_x}$ along each non-opposite arrow and with weight $\frac{2}{d_x}-1$ along the opposite arrow.

\begin{figure}[htb]
\begin{center}
\begin{tikzpicture}
[scale = 0.5,
v/.style = {circle, fill = black, inner sep = 0.6mm},
u/.style = {circle, fill = white, inner sep = 0.1mm}
]
\node[u] (15) at (3.5, -0.5) {\textcolor{blue}{{\small$\frac{2}{d_x} - 1$}}};
\node[u] (12) at (0.7, -1.5) {\textcolor{blue}{{\small$\frac{2}{d_x}$}}};
\node[u] (13) at (-1.3, -2.8) {\textcolor{blue}{{\small$\frac{2}{d_x}$}}};
\node[u] (14) at (-1.5, 0.5) {\textcolor{blue}{{\small$\frac{2}{d_x}$}}};
\node[u] (x) at (0.2, -0.7) {$x$};
\node[v] (1) at (0.5,0.5) {};
\node[v] (2) at (0.5, 2) {};
\node[v] (3) at (2, 2) {};
\node[v] (4) at (2, 0.5) {};
\draw[line width = 1pt] (1) to (2);
\draw[line width = 1pt] (1) to (3);
\draw[line width = 1pt] (1) to (4);
\node[v] (5) at (-0.5,-0.5) {};
\node[v] (6) at (-0.5, -2) {};
\node[v] (7) at (-2, -2) {};
\node[v] (8) at (-2, -0.5) {};
\draw[line width = 1pt] (1) to (5);
\draw[line width = 1pt] (5) to (6);
\draw[line width = 1pt] (5) to (7);
\draw[line width = 1pt] (5) to (8);
\node[u] (15R) at (0.5, 0.2) {};
\node[u] (15L) at (-0.2, -0.5) {};
\draw[draw = blue, line width = 1pt, ->] (15R) to (15L);
\node[u] (12o) at (-0.2, -0.7) {};
\node[u] (12t) at (-0.2, -1.8) {};
\draw[draw = blue, line width = 1pt, ->] (12t) to (12o);
\node[u] (13o) at (-0.65, -0.93) {};
\node[u] (13t) at (-1.7, -2) {};
\draw[draw = blue, line width = 1pt, ->] (13t) to (13o);
\node[u] (14o) at (-0.7, -0.3) {};
\node[u] (14t) at (-1.8, -0.3) {};
\draw[draw = blue, line width = 1pt, ->] (14t) to (14o);
\draw[-, blue] (2, -0.3) to [bend left = -15] (0.3,-0.2);
\end{tikzpicture}
\raisebox{45pt}{$\quad \overset{U^*}{\mapsfrom} \quad$}
\begin{tikzpicture}
[scale = 0.5,
v/.style = {circle, fill = black, inner sep = 0.6mm},
u/.style = {circle, fill = white, inner sep = 0.1mm}
]
\node[u] (x) at (0.7, 0) {$y$};
\node[u, white] (13) at (1.25, 2.7) {{\scriptsize$\frac{2}{d_y}$}};
\node[u, white] (15) at (0, -2) {{\scriptsize$\frac{2}{d_y} - 1$}};
\node[u] (y) at (0.2, -0.7) {$x$};
\node[u] (99) at (-0.7, 0.5) {\textcolor{blue}{$1$}};
\node[v] (1) at (0.5,0.5) {};
\node[v] (2) at (0.5, 2) {};
\node[v] (3) at (2, 2) {};
\node[v] (4) at (2, 0.5) {};
\draw[line width = 1pt] (1) to (2);
\draw[line width = 1pt] (1) to (3);
\draw[line width = 1pt] (1) to (4);
\node[v] (5) at (-0.5,-0.5) {};
\node[v] (6) at (-0.5, -2) {};
\node[v] (7) at (-2, -2) {};
\node[v] (8) at (-2, -0.5) {};
\draw[line width = 1pt] (1) to (5);
\draw[line width = 1pt] (5) to (6);
\draw[line width = 1pt] (5) to (7);
\draw[line width = 1pt] (5) to (8);
\node[u] (51L) at (-0.5, -0.2) {};
\node[u] (51R) at (0.2, 0.5) {};
\draw[draw = blue, line width = 1pt, ->] (51L) to (51R);
\end{tikzpicture}
\raisebox{45pt}{$\quad \overset{U}{\mapsto} \quad$}
\begin{tikzpicture}
[scale = 0.5,
v/.style = {circle, fill = black, inner sep = 0.6mm},
u/.style = {circle, fill = white, inner sep = 0.1mm}
]
\node[u] (15) at (3, -2) {\textcolor{blue}{{\small$\frac{2}{d_y} - 1$}}};
\node[u] (12) at (-0.7, 1.25) {\textcolor{blue}{{\small$\frac{2}{d_y}$}}};
\node[u] (13) at (1.3, 2.8) {\textcolor{blue}{{\small$\frac{2}{d_y}$}}};
\node[u] (14) at (1.8, -0.4) {\textcolor{blue}{{\small$\frac{2}{d_y}$}}};
\node[u] (x) at (0.7, 0) {$y$};
\node[v] (1) at (0.5,0.5) {};
\node[v] (2) at (0.5, 2) {};
\node[v] (3) at (2, 2) {};
\node[v] (4) at (2, 0.5) {};
\draw[line width = 1pt] (1) to (2);
\draw[line width = 1pt] (1) to (3);
\draw[line width = 1pt] (1) to (4);
\node[v] (5) at (-0.5,-0.5) {};
\node[v] (6) at (-0.5, -2) {};
\node[v] (7) at (-2, -2) {};
\node[v] (8) at (-2, -0.5) {};
\draw[line width = 1pt] (1) to (5);
\draw[line width = 1pt] (5) to (6);
\draw[line width = 1pt] (5) to (7);
\draw[line width = 1pt] (5) to (8);
\node[u] (15R) at (0.5, 0.2) {};
\node[u] (15L) at (-0.2, -0.5) {};
\draw[draw = blue, line width = 1pt, ->] (15R) to (15L);
\node[u] (12o) at (0.2, 0.7) {};
\node[u] (12t) at (0.2, 1.8) {};
\draw[draw = blue, line width = 1pt, ->] (12o) to (12t);
\node[u] (13o) at (0.65, 0.93) {};
\node[u] (13t) at (1.7, 2) {};
\draw[draw = blue, line width = 1pt, ->] (13o) to (13t);
\node[u] (14o) at (0.7, 0.3) {};
\node[u] (14t) at (1.8, 0.3) {};
\draw[draw = blue, line width = 1pt, ->] (14o) to (14t);
\draw[-, blue] (1.5,-1.9) to [bend left = 15] (0.3,-0.3);
\end{tikzpicture}
\caption{The action of $U$ and $U^*$
} \label{0519-3}
\end{center}
\end{figure}

What is important in this paper is the case where the degree of the terminus $t(a)$ of an arc $a$ is 2.
Indeed, let $b$ be the unique arc other than $a^{-1}$ whose origin is $t(a)$.
Then, by Lemma~\ref{0507-1},
\[ U \BM{e}_a = \frac{2}{2}(\BM{e}_{a^{-1}} + \BM{e}_{b}) - \BM{e}_{a^{-1}} = \BM{e}_{b}. \]
Thus, as illustrated in Figure~\ref{44}, the action of the time evolution matrix $U$ on the standard basis vector $\BM{e}_a$ can be viewed as simply moving forward by one step.

\begin{figure}[htb]
\begin{center}
\begin{tikzpicture}
[scale = 0.6,
line width = 0.8pt,
v/.style = {circle, fill = black, inner sep = 0.8mm},
u/.style = {circle, fill = white, inner sep = 0.0mm}]
  \node[u] (a) at (1.5, 0.7) {\textcolor{blue}{{\small $1$}}};
  \node[u] (1) at (-1.2, 0) {};
  \node[v] (2) at (0, 0) {};
  \node[v] (3) at (3, 0) {};
  \node[u] (4) at (-1.2, 0.68) {};
  \node[u] (5) at (-1.2, -0.68) {};
  \node[v] (6) at (6, 0) {};
  \node[u] (7) at (7.2, 0) {};
  \node[u] (8) at (7.2, 0.68) {};
  \node[u] (9) at (7.2, -0.68) {};
  \node[u] (10) at (0.3, 0.2) {};
  \node[u] (11) at (2.7, 0.2) {};
  \node[u] (12) at (3.3, 0.2) {};
  \node[u] (13) at (5.7, 0.2) {};
  \draw (1) to (2);
  \draw[-] (2) to (4);
  \draw[-] (2) to (3);
  \draw[-] (5) to (2);
  \draw[draw= blue,->] (10) to (11);
  \draw[-] (3) to (6);
  \draw[-] (6) to (7);
  \draw[-] (6) to (8);
  \draw[-] (6) to (9);
\end{tikzpicture}
\raisebox{3.5mm}{$\quad \overset{U}{\mapsto} \quad$}
\begin{tikzpicture}
[scale = 0.7,
line width = 0.8pt,
v/.style = {circle, fill = black, inner sep = 0.8mm},u/.style = {circle, fill = white, inner sep = 0.1mm}]
  \node[u] (a) at (4.5, 0.7) {\textcolor{blue}{{\small $1$}}};
  \node[u] (1) at (-1.2, 0) {};
  \node[v] (2) at (0, 0) {};
  \node[v] (3) at (3, 0) {};
  \node[u] (4) at (-1.2, 0.68) {};
  \node[u] (5) at (-1.2, -0.68) {};
  \node[v] (6) at (6, 0) {};
  \node[u] (7) at (7.2, 0) {};
  \node[u] (8) at (7.2, 0.68) {};
  \node[u] (9) at (7.2, -0.68) {};
  \node[u] (10) at (0.3, 0.2) {};
  \node[u] (11) at (2.7, 0.2) {};
  \node[u] (12) at (3.3, 0.2) {};
  \node[u] (13) at (5.7, 0.2) {};
  \draw (4.5,0.7) node[blue]{};
  \draw (1) to (2);
  \draw[-] (2) to (4);
  \draw[-] (2) to (3);
  \draw[-] (5) to (2);
  \draw[draw= blue,->] (12) to (13);
  \draw[-] (3) to (6);
  \draw[-] (6) to (7);
  \draw[-] (6) to (8);
  \draw[-] (6) to (9);
\end{tikzpicture}
\caption{The action of $U$ in the case where the degree of the terminus of an arc is $2$} \label{44}
\end{center}
\end{figure}

Let $G$ be a graph, and let $U := U(G)$ be its time evolution matrix.
If there exists a positive integer $\tau$ such that $U^{\tau} = I$, then we say that $G$ is \emph{periodic}, or that $G$ \emph{induces a periodic Grover walk}.
If $G$ is periodic, then the smallest positive integer $\tau$ satisfying $U^{\tau} = I$ is called the \emph{period} of $G$, and $G$ is said to be \emph{$\tau$-periodic}.
The Grover walk on $G$ is also said to be \emph{$\tau$-periodic}.
For example, it follows from Figure~\ref{44} that the cycle graph $C_n$ on $n$ vertices is periodic and that its period is $n$.
Indeed, since the cycle graph is $2$-regular, it is easy to verify that $U(C_n)^n \BM{e}_a = \BM{e}_a$ for every standard basis vector $\BM{e}_a \in \MB{C}^{\MC{A}(C_n)}$, and hence $U(C_n)^n = I$.
For recent developments on the periodicity of Grover walks, we refer to~\cite{bhakta2026state} and the references therein.
At present, studies on the periodicity of Grover walks have mainly focused on regular graphs.
For nonregular graphs, apart from complete bipartite graphs, generalized Bethe trees are the only family of graphs whose periodicity has been studied~\cite{kubota2018generalizedBT}.

\subsection{Spectral mapping theorem for Grover walks} \label{0824-1}

In analyzing the Grover walk on a graph, not only the behavior of weighted arcs described above but also the spectral analysis of the time evolution matrix $U$ is important.
Although the matrix $U$ given by Equality~\eqref{1223-1} appears complicated at first glance, its eigenvalues are in fact known to be described in terms of the eigenvalues of the normalized adjacency matrix.

Let $G=(V,E)$ be a graph, and let $A$ and $D$ denote its adjacency matrix and degree matrix, respectively.
Define the matrix $T = T(G) \in \MB{C}^{V\times V}$ by
\[ T := D^{-1/2} A D^{-1/2}, \]
which is called the \emph{normalized adjacency matrix}.
In the context of quantum walks, the normalized adjacency matrix is also called the \emph{discriminant matrix}.
The following theorem is known as the \emph{spectral mapping theorem for the Grover walk}.

\begin{thm}[\cite{higuchi2014spectral}] \label{0820-1}
Let $G = (V, E)$ be a connected graph.
Then we have
\[ \Spec(U(G)) = \{ e^{\pm i \arccos \lambda} \mid \lambda \in \Spec(T(G)) \} \cup \{ 1^{(M_1)}, -1^{(M_{-1})} \}, \]
where $M_1 = |E| - |V| + 1$ and $M_{-1} = |E| - |V| + \dim \ker (T(G)+I)$.
\end{thm}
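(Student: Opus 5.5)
The plan is the standard \emph{boundary-operator} argument: factor $U$ through $\MB{C}^V$ and split $\MB{C}^{\MC{A}}$ into an inherited part, on which the eigenvalues come from $T$, and its orthogonal complement, on which $U$ acts as $\pm 1$.

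\textbf{The factorization.} Define $d \colon \MB{C}^{\MC{A}} \to \MB{C}^{V}$ by
\[
(d\psi)_x = \frac{1}{\sqrt{d_x}} \sum_{t(a)=x} \psi_a .
\]
Let $S \in \MB{C}^{\MC{A}\times\MC{A}}$ be the arc-reversal matrix, $S\BM{e}_a = \BM{e}_{a^{-1}}$, so that $S^2 = I$. I will check three identities directly from the definitions and Lemma~\ref{0507-1}:
\[
dd^* = I_V, \qquad U = S(2d^*d - I), \qquad dSd^* = D^{-1/2}AD^{-1/2} = T .
\]
Note that $d^*d$ is the orthogonal projection onto $d^*\MB{C}^V$. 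Put $\MC{L} = d^*\MB{C}^V + Sd^*\MB{C}^V$. Since $U$ is a composition of $S$ and a reflection that fixes $\MC{L}$, both $\MC{L}$ and $\MC{L}^\perp$ are $U$-invariant.

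\textbf{The inherited part.} Take an orthonormal eigenbasis $f_1,\dots,f_{|V|}$ of $T$, with $Tf_j = \lambda_j f_j$. Using the identities above,
\[
U d^*f = Sd^*f, \qquad U(Sd^*f) = 2\lambda\, Sd^*f - d^*f .
\]
So on $\mathrm{span}\{d^*f, Sd^*f\}$ the map $U$ has characteristic polynomial $\mu^2 - 2\lambda\mu + 1$, whose roots are $e^{\pm i\arccos\lambda}$. These two vectors are linearly dependent exactly when $|\lambda| = 1$. This follows from $\langle d^*f, Sd^*f\rangle = \lambda\|f\|^2$ and Cauchy--Schwarz; in that case $Sd^*f = \lambda d^*f$, and one gets the single eigenvalue $\lambda$.

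For $|\lambda_j|, |\lambda_k| < 1$ the planes are mutually orthogonal, since the inner products reduce to $\langle f_j, f_k\rangle$ and $\langle f_j, Tf_k\rangle$. Hence $U|_{\MC{L}}$ contributes $e^{\pm i\arccos\lambda}$ for each $\lambda\in\Spec(T)$ with $|\lambda|<1$. It contributes $\lambda$ once for each $\lambda = \pm 1$; this is the convention I would adopt in reading the formula, with the remaining copies of $\pm1$ absorbed into $M_{\pm1}$. It follows that
\[
\dim\MC{L} = 2|V| - \dim\ker(T-I) - \dim\ker(T+I).
\]

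\textbf{The complement.} We have $\MC{L}^\perp = \ker d \cap \ker(dS)$, and on it $U = -S$. Since $S$ preserves $\MC{L}^\perp$, that space splits into $S = \pm1$ parts.

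The $S=-1$ part consists of antisymmetric arc functions with zero divergence at every vertex. This is the cycle space, of dimension $|E|-|V|+1$, and $U$ acts on it as $+1$. Connectivity gives $\dim\ker(T-I)=1$, so this accounts for $M_1$.

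The $S=+1$ part consists of symmetric functions, i.e.\ functions on edges, lying in the kernel of the unsigned incidence map. Its dimension is $|E| - \operatorname{rank} = |E| - |V| + \beta$, where $\beta = 1$ if $G$ is bipartite and $\beta = 0$ otherwise. Since $\beta = \dim\ker(T+I)$ for connected $G$, this gives $M_{-1}$, and $U$ acts on it as $-1$.

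A final dimension count, $\dim\MC{L} + \dim\MC{L}^\perp = 2|E|$, confirms that nothing is missed.

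\textbf{Main obstacle.} The delicate points are the degenerate eigenvalues $\lambda = \pm1$ and the multiplicity bookkeeping. I will handle them by first proving the orthogonal decomposition $\MC{L}\oplus\MC{L}^\perp$ together with the exact dimension of $\MC{L}$, and only then computing the kernels of the incidence maps.
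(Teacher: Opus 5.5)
The paper gives no proof of this theorem; it is quoted from \cite{higuchi2014spectral}. Your proposal is correct and is essentially the standard boundary-operator argument of that reference: $dd^*=I$, $U=S(2d^*d-I)$, $dSd^*=T$, the $U$-invariant splitting $\mathcal{L}\oplus\mathcal{L}^\perp$ with $U|_{\mathcal{L}^\perp}=-S$, and the $\pm1$ multiplicities read off from the kernels of the oriented and unoriented incidence maps.

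Your convention that each $\lambda=\pm1$ in $\Spec(T)$ contributes a single copy of $\lambda$ is the correct reading of the formula; the total count $2|E|$ forces it, as $K_2$ already shows. You can also avoid quoting the rank of the unoriented incidence matrix: the oriented and unoriented incidence matrices $B_-$, $B_+$ satisfy $B_\mp B_\mp^{\top}=D\mp A$, which gives the kernel dimensions $|E|-|V|+\dim\ker(T\mp I)$ at once.
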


\section{Dutch windmill graphs and uniform theta graphs} \label{S3}

The \emph{Dutch windmill graph}, denoted by $D_n^{(t)}$, is obtained from $t$ copies of the cycle graph $C_n$ by identifying one vertex from each copy.
Figure~\ref{0810-1} depicts $D_4^{(3)}$.
When $n=3$, this graph is known by various names,
including the petal graph, the friendship graph, and the windmill graph.
Note that a graph obtained by identifying one vertex from each of several complete graphs, not cycle graphs, is called a \emph{windmill graph}
and is also referred to as a \emph{generalized friendship graph} in some literature.

\begin{figure}[htb]
\centering
\begin{tikzpicture}
[scale = 0.5,
v/.style = {circle, fill = black, inner sep = 0.6mm},
u/.style = {circle, fill = white, inner sep = 0.1mm}
]
\node[v] (0) at (0,0) {};
\node[v] (11) at (55:2) {};
\node[v] (12) at (90:{4*cos(35)}) {};
\node[v] (13) at (125:2) {};
\node[v] (21) at (175:2) {};
\node[v] (22) at (210:{4*cos(35)}) {};
\node[v] (23) at (245:2) {};
\node[v] (31) at (295:2) {};
\node[v] (32) at (330:{4*cos(35)}) {};
\node[v] (33) at (5:2) {};
\draw[line width = 1pt] (0) -- (11) -- (12) -- (13) -- (0);
\draw[line width = 1pt] (0) -- (21) -- (22) -- (23) -- (0);
\draw[line width = 1pt] (0) -- (31) -- (32) -- (33) -- (0);
\end{tikzpicture}
\caption{The Dutch windmill graph $D_4^{(3)}$} \label{0810-1}
\end{figure}

\begin{thm} \label{0722-2}
Let $t \geq 2$ and $n \geq 3$ be integers.
The Dutch windmill graph $D_n^{(t)}$ is $2n$-periodic.
In particular, $D_3^{(t)}$ is $6$-periodic.
\end{thm}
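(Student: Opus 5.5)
We will prove that $U^{2n}=I$ directly, by following the standard basis vectors $\BM{e}_a$ under repeated application of $U$, in the spirit of Figure~\ref{44}. Afterwards we check that no smaller positive power of $U$ is the identity. Let $c$ denote the central vertex, which has degree $2t$; every other vertex has degree $2$. Each arc lies on exactly one petal, that is, one copy of $C_n$. Fix a petal and write its vertices as $c=v_0,v_1,\dots,v_{n-1}$ in cyclic order.

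The first step is to handle arcs whose terminus is not $c$. Such an arc moves forward one step each time $U$ is applied, exactly as in Figure~\ref{44}, until it reaches an arc entering $c$. It is therefore enough to understand the vectors $\BM{e}_a$ with $t(a)=c$, and the vectors that come out of them. Take $a=(v_1,c)$. By Lemma~\ref{0507-1},
\[
U\BM{e}_a=\frac{1}{t}\sum_{o(z)=c}\BM{e}_z-\BM{e}_{(c,v_1)}.
\]
All outgoing arcs from $c$ now travel forward along their petals without splitting, since every intermediate vertex has degree $2$. After $n-1$ further steps, each of them arrives back at $c$ as an incoming arc. Precisely, the arc $(c,v_1)$ of a petal arrives as $(v_{n-1},c)$, and the arc $(c,v_{n-1})$ arrives as $(v_1,c)$. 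Consequently
\[
U^{n}\BM{e}_a=\frac{1}{t}\sum_{t(z)=c}\BM{e}_z-\BM{e}_{(v_{n-1},c)},
\]
where $v_{n-1}$ belongs to the same petal as $a$.

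Next we apply $U$ once more. For this it suffices to know $U$ applied to $\BM{s}:=\sum_{t(z)=c}\BM{e}_z$. Each incoming arc sends $\frac{2}{2t}=\frac1t$ to every outgoing arc and subtracts its own reverse. Summing over the $2t$ incoming arcs gives
\[
U\BM{s}=2\sum_{o(z)=c}\BM{e}_z-\sum_{o(z)=c}\BM{e}_z=\sum_{o(z)=c}\BM{e}_z.
\]
Propagating this vector for $n-1$ more steps returns $\BM{s}$. Hence $U^n\BM{s}=\BM{s}$, which is consistent with Lemma~\ref{0821-1}: $\BM{s}$ is essentially a stationary vector. Let $a'=(v_{n-1},c)$. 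The same computation as for $a$ gives $U^n\BM{e}_{a'}=\frac1t\BM{s}-\BM{e}_{(v_1,c)}=\frac1t\BM{s}-\BM{e}_a$. Combining these,
\[
U^{2n}\BM{e}_a=\frac1t\BM{s}-\Bigl(\frac1t\BM{s}-\BM{e}_a\Bigr)=\BM{e}_a.
\]
For a general arc $b$, write $b=U^{k}\BM{e}_a$-style as a forward translate: $\BM{e}_b=U^{-k}\BM{e}_{a}$ for a suitable arc $a$ entering $c$ and some $0\le k<n$, because $U$ acts as a bijective shift on arcs away from $c$. Since $U^{2n}$ commutes with $U^{-k}$, this gives $U^{2n}\BM{e}_b=\BM{e}_b$ for every arc, and therefore $U^{2n}=I$.

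The step I expect to need the most care is minimality of the period. My plan is to show that $U^{m}\ne I$ for every proper divisor $m$ of $2n$, using the explicit orbit computed above. For $m\le n$, the vector $U^m\BM{e}_a$ has a nonzero coefficient $\pm\frac1t$ on arcs of other petals; here $t\ge2$ is used. For $n<m<2n$, we have $U^m\BM{e}_a=U^{m-n}\bigl(\frac1t\BM{s}-\BM{e}_{a'}\bigr)$, and the $\frac1t$-weighted terms spread over all petals, so this vector is not $\BM{e}_a$. Alternatively, the spectral mapping theorem (Theorem~\ref{0820-1}) can be used: it suffices to exhibit an eigenvalue $\lambda$ of $T$ with $e^{i\arccos\lambda}$ a primitive $2n$-th root of unity. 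This comes from an eigenvector of $T$ that is antisymmetric between two petals, vanishes at $c$, and restricts to a path eigenvector on $v_1,\dots,v_{n-1}$, giving $\lambda=\cos(\pi j/n)$ with $j=1$.
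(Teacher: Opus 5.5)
Your proof of $U^{2n}=I$ is correct and is essentially the paper's argument. The paper likewise traces a basis vector on an arc entering the central vertex, then reaches every other arc through the degree-$2$ shift and the invertibility of $U$. Your decomposition $U^n\BM{e}_a=\frac1t\BM{s}-\BM{e}_{a'}$ together with $U^n\BM{s}=\BM{s}$ is only a linear-algebraic bookkeeping of the paper's step-by-step figure.

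One statement in your minimality paragraph is false, though harmless. For $n<m<2n$ the $\frac1t$-weighted terms do not spread over all petals; they cancel exactly:
\[ U^{n+1}\BM{e}_a=\tfrac1t\,U\BM{s}-U\BM{e}_{a'}=\BM{e}_{(c,v_{n-1})}. \]
This is precisely the collapse to a single arc of weight $1$ shown in the paper's figure. So for $n<m<2n$, $U^m\BM{e}_a$ is the basis vector of an arc of the same petal other than $a$. It still differs from $\BM{e}_a$, but not for the reason you gave.

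You do not need this range at all. Every proper divisor of $2n$ is at most $n$, so your $m\le n$ argument already shows the period is exactly $2n$; that argument correctly uses $t\ge 2$ to get nonzero weight on another petal. Alternatively, your eigenvector of $T$ for $\cos(\pi/n)$, combined with Theorem~\ref{0820-1}, gives the same conclusion. The paper leaves minimality implicit in its figure, so your explicit treatment is a small improvement once the false claim is removed.
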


\begin{proof}
Let $U := U(D_n^{(t)})$.
We show that every standard basis vector $\BM{e}_a \in \MB{C}^{\MC{A}(D_n^{(t)})}$ satisfies $U^{2n}\BM{e}_a=\BM{e}_a$.
Let $x$ be the central vertex of the Dutch windmill graph $D_n^{(t)}$.
First, we show that an arbitrary arc $a$ with $t(a)=x$ satisfies $U^{2n}\BM{e}_a=\BM{e}_a$.
By symmetry, we may assume that the initial state is as shown in Figure~\ref{0811-1} without loss of generality.
By repeatedly applying Lemma~\ref{0507-1}, this initial state returns to itself after $2n$ steps as shown in Figure~\ref{0811-2}.

\begin{figure}[htb]
\centering
\begin{tikzpicture}
[scale = 0.5,
v/.style = {circle, fill = black, inner sep = 0.6mm},
u/.style = {circle, fill = white, inner sep = 0.0mm}
]
\node[u] (w) at (-1.6, -0.1) {{\small \TB{$1$}}};
\node[v, label=above right:$x$] (0) at (0,0) {};
\node[v] (11) at (110:3) {};
\node[v] (12) at (125:5) {};
\node[u] (13) at (135:5.5) {$\cdots$};
\node[v] (14) at (145:5) {};
\node[v] (15) at (160:3) {};
\draw[line width=1pt] (0) -- (11) -- (12);
\draw[line width=1pt] (14) -- (15) -- (0);
\node[v] (21) at (200:3) {};
\node[v] (22) at (215:5) {};
\node[u] (23) at (225:5.5) {$\cdots$};
\node[v] (24) at (235:5) {};
\node[v] (25) at (250:3) {};
\draw[line width=1pt] (0) -- (21) -- (22);
\draw[line width=1pt] (24) -- (25) -- (0);
\node[v] (31) at (290:3) {};
\node[v] (32) at (305:5) {};
\node[u] (33) at (315:5.5) {$\cdots$};
\node[v] (34) at (325:5) {};
\node[v] (35) at (340:3) {};
\draw[line width=1pt] (0) -- (31) -- (32);
\draw[line width=1pt] (34) -- (35) -- (0);
\node[u] at (45:3) {$\ddots$};
\draw[draw=blue, line width=1pt, ->, shorten <=6pt, shorten >=6pt]
([yshift=-7pt]15.center) -- ([yshift=-7pt]0.center);
\end{tikzpicture}
\caption{The standard basis vector $\BM{e}_a$ with $t(a) = x$} \label{0811-1}
\end{figure}

Next, we consider the standard basis vector $\BM{e}_a$ corresponding to an arbitrary arc $a \in \MC{A}(D_{n}^{(t)})$.
Among $\BM{e}_a, U\BM{e}_a, \dots, U^{n-1}\BM{e}_a$, there exists a standard basis vector corresponding to an arc whose terminal vertex is $x$.
Suppose that $U^j \BM{e}_a$ is such a vector.
By the preceding argument, we have $U^{2n} U^j \BM{e}_a = U^j \BM{e}_a$, and hence $U^{2n}\BM{e}_a=\BM{e}_a$.
Therefore, the Dutch windmill graph $D_n^{(t)}$ is $2n$-periodic.
\end{proof}

\begin{figure}[htb]
\centering
\begin{tikzpicture}
[scale = 0.4,
v/.style = {circle, fill = black, inner sep = 0.6mm},
u/.style = {circle, fill = white, inner sep = 0.1mm}
]
\node[u] (w) at (-1.6, -0.1) {{\small \TB{$1$}}};
\node[v, label=above right:$x$] (0) at (0,0) {};
\node[v] (11) at (110:3) {};
\node[v] (12) at (125:5) {};
\node[u] (13) at (135:5.5) {$\cdots$};
\node[v] (14) at (145:5) {};
\node[v] (15) at (160:3) {};
\draw[line width=1pt] (0) -- (11) -- (12);
\draw[line width=1pt] (14) -- (15) -- (0);
\node[v] (21) at (200:3) {};
\node[v] (22) at (215:5) {};
\node[u] (23) at (225:5.5) {$\cdots$};
\node[v] (24) at (235:5) {};
\node[v] (25) at (250:3) {};
\draw[line width=1pt] (0) -- (21) -- (22);
\draw[line width=1pt] (24) -- (25) -- (0);
\node[v] (31) at (290:3) {};
\node[v] (32) at (305:5) {};
\node[u] (33) at (315:5.5) {$\cdots$};
\node[v] (34) at (325:5) {};
\node[v] (35) at (340:3) {};
\draw[line width=1pt] (0) -- (31) -- (32);
\draw[line width=1pt] (34) -- (35) -- (0);
\node[u] at (45:3) {$\ddots$};
\draw[draw=blue, line width=1pt, ->, shorten <=6pt, shorten >=6pt]
([yshift=-7pt]15.center) -- ([yshift=-7pt]0.center);
\end{tikzpicture}
\raisebox{50pt}{$\quad \overset{U}{\mapsto} \quad$}
\begin{tikzpicture}
[scale = 0.4,
v/.style = {circle, fill = black, inner sep = 0.6mm},
u/.style = {circle, fill = white, inner sep = 0.1mm}
]
\node[v, label=above right:$x$] (0) at (0,0) {};
\node[v] (11) at (110:3) {};
\node[v] (12) at (125:5) {};
\node[u] (13) at (135:5.5) {$\cdots$};
\node[v] (14) at (145:5) {};
\node[v] (15) at (160:3) {};
\draw[line width=1pt] (0) -- (11) -- (12);
\draw[line width=1pt] (14) -- (15) -- (0);
\node[v] (21) at (200:3) {};
\node[v] (22) at (215:5) {};
\node[u] (23) at (225:5.5) {$\cdots$};
\node[v] (24) at (235:5) {};
\node[v] (25) at (250:3) {};
\draw[line width=1pt] (0) -- (21) -- (22);
\draw[line width=1pt] (24) -- (25) -- (0);
\node[v] (31) at (290:3) {};
\node[v] (32) at (305:5) {};
\node[u] (33) at (315:5.5) {$\cdots$};
\node[v] (34) at (325:5) {};
\node[v] (35) at (340:3) {};
\draw[line width=1pt] (0) -- (31) -- (32);
\draw[line width=1pt] (34) -- (35) -- (0);
\node[u] at (45:3) {$\ddots$};
\draw[draw=red, line width=1pt, <-, shorten <=6pt, shorten >=6pt]
([yshift=-7pt]15.center) -- ([yshift=-7pt]0.center);
\draw[draw=blue, line width=1pt, ->, shorten <=6pt, shorten >=6pt]
([yshift=-7pt]0.center) -- ([yshift=-7pt]21.center);
\draw[draw=blue, line width=1pt, ->, shorten <=6pt, shorten >=6pt]
([xshift=7pt]0.center) -- ([xshift=7pt]25.center);
\draw[draw=blue, line width=1pt, ->, shorten <=6pt, shorten >=6pt]
([xshift=7pt]0.center) -- ([xshift=7pt]31.center);
\draw[draw=blue, line width=1pt, ->, shorten <=6pt, shorten >=6pt]
([yshift=7pt]0.center) -- ([yshift=7pt]35.center);
\draw[draw=blue, line width=1pt, ->, shorten <=6pt, shorten >=6pt]
([xshift=7pt]0.center) -- ([xshift=7pt]11.center);
\end{tikzpicture}
\raisebox{50pt}{$\quad \overset{U^{n-1}}{\mapsto} \quad$}
\begin{tikzpicture}
[scale = 0.4,
v/.style = {circle, fill = black, inner sep = 0.6mm},
u/.style = {circle, fill = white, inner sep = 0.1mm}
]
\node[v, label=above right:$x$] (0) at (0,0) {};
\node[v] (11) at (110:3) {};
\node[v] (12) at (125:5) {};
\node[u] (13) at (135:5.5) {$\cdots$};
\node[v] (14) at (145:5) {};
\node[v] (15) at (160:3) {};
\draw[line width=1pt] (0) -- (11) -- (12);
\draw[line width=1pt] (14) -- (15) -- (0);
\node[v] (21) at (200:3) {};
\node[v] (22) at (215:5) {};
\node[u] (23) at (225:5.5) {$\cdots$};
\node[v] (24) at (235:5) {};
\node[v] (25) at (250:3) {};
\draw[line width=1pt] (0) -- (21) -- (22);
\draw[line width=1pt] (24) -- (25) -- (0);
\node[v] (31) at (290:3) {};
\node[v] (32) at (305:5) {};
\node[u] (33) at (315:5.5) {$\cdots$};
\node[v] (34) at (325:5) {};
\node[v] (35) at (340:3) {};
\draw[line width=1pt] (0) -- (31) -- (32);
\draw[line width=1pt] (34) -- (35) -- (0);
\node[u] at (45:3) {$\ddots$};
\draw[draw=blue, line width=1pt, ->, shorten <=6pt, shorten >=6pt]
([yshift=-7pt]15.center) -- ([yshift=-7pt]0.center);
\draw[draw=blue, line width=1pt, <-, shorten <=6pt, shorten >=6pt]
([yshift=-7pt]0.center) -- ([yshift=-7pt]21.center);
\draw[draw=blue, line width=1pt, <-, shorten <=6pt, shorten >=6pt]
([xshift=7pt]0.center) -- ([xshift=7pt]25.center);
\draw[draw=blue, line width=1pt, <-, shorten <=6pt, shorten >=6pt]
([xshift=7pt]0.center) -- ([xshift=7pt]31.center);
\draw[draw=blue, line width=1pt, <-, shorten <=6pt, shorten >=6pt]
([yshift=7pt]0.center) -- ([yshift=7pt]35.center);
\draw[draw=red, line width=1pt, <-, shorten <=6pt, shorten >=6pt]
([xshift=7pt]0.center) -- ([xshift=7pt]11.center);
\end{tikzpicture} \\[10pt]
\raisebox{50pt}{$\quad \overset{U}{\mapsto} \quad$}
\begin{tikzpicture}
[scale = 0.4,
v/.style = {circle, fill = black, inner sep = 0.6mm},
u/.style = {circle, fill = white, inner sep = 0.1mm}
]
\node[u] (w) at (0.2, 1.8) {{\small \TB{$1$}}};
\node[v, label=above right:$x$] (0) at (0,0) {};
\node[v] (11) at (110:3) {};
\node[v] (12) at (125:5) {};
\node[u] (13) at (135:5.5) {$\cdots$};
\node[v] (14) at (145:5) {};
\node[v] (15) at (160:3) {};
\draw[line width=1pt] (0) -- (11) -- (12);
\draw[line width=1pt] (14) -- (15) -- (0);
\node[v] (21) at (200:3) {};
\node[v] (22) at (215:5) {};
\node[u] (23) at (225:5.5) {$\cdots$};
\node[v] (24) at (235:5) {};
\node[v] (25) at (250:3) {};
\draw[line width=1pt] (0) -- (21) -- (22);
\draw[line width=1pt] (24) -- (25) -- (0);
\node[v] (31) at (290:3) {};
\node[v] (32) at (305:5) {};
\node[u] (33) at (315:5.5) {$\cdots$};
\node[v] (34) at (325:5) {};
\node[v] (35) at (340:3) {};
\draw[line width=1pt] (0) -- (31) -- (32);
\draw[line width=1pt] (34) -- (35) -- (0);
\node[u] at (45:3) {$\ddots$};
\draw[draw=blue, line width=1pt, ->, shorten <=6pt, shorten >=6pt]
([xshift=7pt]0.center) -- ([xshift=7pt]11.center);
\end{tikzpicture}
\raisebox{50pt}{$\quad \overset{U^{n-1}}{\mapsto} \quad$}
\begin{tikzpicture}
[scale = 0.4,
v/.style = {circle, fill = black, inner sep = 0.6mm},
u/.style = {circle, fill = white, inner sep = 0.1mm}
]
\node[u] (w) at (-1.6, -0.1) {{\small \TB{$1$}}};
\node[v, label=above right:$x$] (0) at (0,0) {};
\node[v] (11) at (110:3) {};
\node[v] (12) at (125:5) {};
\node[u] (13) at (135:5.5) {$\cdots$};
\node[v] (14) at (145:5) {};
\node[v] (15) at (160:3) {};
\draw[line width=1pt] (0) -- (11) -- (12);
\draw[line width=1pt] (14) -- (15) -- (0);
\node[v] (21) at (200:3) {};
\node[v] (22) at (215:5) {};
\node[u] (23) at (225:5.5) {$\cdots$};
\node[v] (24) at (235:5) {};
\node[v] (25) at (250:3) {};
\draw[line width=1pt] (0) -- (21) -- (22);
\draw[line width=1pt] (24) -- (25) -- (0);
\node[v] (31) at (290:3) {};
\node[v] (32) at (305:5) {};
\node[u] (33) at (315:5.5) {$\cdots$};
\node[v] (34) at (325:5) {};
\node[v] (35) at (340:3) {};
\draw[line width=1pt] (0) -- (31) -- (32);
\draw[line width=1pt] (34) -- (35) -- (0);
\node[u] at (45:3) {$\ddots$};
\draw[draw=blue, line width=1pt, ->, shorten <=6pt, shorten >=6pt]
([yshift=-7pt]15.center) -- ([yshift=-7pt]0.center);
\end{tikzpicture}
\caption{Proof of $U^{2n}\BM{e}_a=\BM{e}_a$.
For arrows whose weights are not indicated, blue arrows have weight $\frac{1}{t}$, while red arrows have weight $\frac{1}{t}-1$.} \label{0811-2}
\end{figure}

Note that the condition $t\geq 2$ in the theorem above is essential, because $D_n^{(1)}$ is isomorphic to the cycle graph $C_n$, which has period $n$.

\begin{cor} \label{0901-1}
Let $t \geq 2$ be an integer.
Then, we have
\[ \Spec(T(D_3^{(t)})) = \left\{ 1^{(1)}, \frac{1}{2}^{(t-1)}, -\frac{1}{2}^{(t+1)} \right\}. \]
\end{cor}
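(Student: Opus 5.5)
The corollary should follow from Theorem~\ref{0722-2} together with the spectral mapping theorem (Theorem~\ref{0820-1}), supplemented by two trace identities. The graph $D_3^{(t)}$ has $|V| = 2t+1$ vertices and $|E| = 3t$ edges. It is connected, and it is non-bipartite because it contains triangles, so $-1 \notin \Spec(T)$.

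The first step uses periodicity. Since $D_3^{(t)}$ is $6$-periodic, every eigenvalue $\mu$ of $U$ satisfies $\mu^6 = 1$. By Theorem~\ref{0820-1}, each $\lambda \in \Spec(T)$ yields the eigenvalues $e^{\pm i\arccos\lambda}$, which must be sixth roots of unity. Hence $\arccos\lambda \in \{0, \pi/3, 2\pi/3, \pi\}$, that is, $\lambda \in \{1, \frac12, -\frac12, -1\}$. Connectedness gives multiplicity exactly $1$ for the eigenvalue $1$, and non-bipartiteness excludes $-1$. The remaining $2t$ eigenvalues therefore lie in $\{\frac12, -\frac12\}$, with multiplicities $p$ and $q$ satisfying $p+q = 2t$.

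The second step pins down $p$ and $q$ using $\tr T = 0$, since $A$ has zero diagonal. This gives
\[ 1 + \tfrac12 p - \tfrac12 q = 0, \]
so $q - p = 2$. Combined with $p + q = 2t$, this yields $p = t-1$ and $q = t+1$, which is the claimed spectrum.

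An optional check is $\tr T^2 = \sum_{\{x,y\}\in E} \frac{2}{d_x d_y}$. The $2t$ spokes contribute $2t \cdot \frac{2}{2\cdot 2t} = 1$ in total, and the $t$ rim edges contribute $t \cdot \frac{2}{4} = \frac t2$, for a total of $1 + \frac t2$. The eigenvalues give $1 + \frac{p+q}{4} = 1 + \frac t2$, which is consistent.

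The main obstacle is only the care needed in the first step: making sure that the spectral mapping theorem genuinely forces every eigenvalue of $T$, not just those of $U$, into the finite set $\{\pm1, \pm\frac12\}$. This holds because each $\lambda$ appears through $e^{\pm i\arccos\lambda}$, and $\arccos$ is injective on $[-1,1]$. Everything else is routine.
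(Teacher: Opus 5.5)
Your proposal is correct and follows the paper's proof: $U^6=I$ together with the spectral mapping theorem confines $\Spec(T)$ to $\{\pm 1,\pm\frac12\}$, non-bipartiteness rules out $-1$, and the vertex count $2t+1$ with $\tr T=0$ forces the multiplicities $(t-1,t+1)$. Your $\tr T^2$ computation is a valid extra consistency check that the paper does not need or use.
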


\begin{proof}
By Theorem~\ref{0722-2}, we have $U(D_3^{(t)})^6 = I$, and hence every eigenvalue of $U(D_3^{(t)})$ belongs to $\{ e^{\frac{2k \pi}{6}i} \mid  k = 0,1,2, \dots, 5 \}$.
Furthermore, Theorem~\ref{0820-1} implies that every eigenvalue of $T(D_3^{(t)})$ belongs to
\[ \left \{ \cos \frac{2k \pi}{6} \, \middle| \, k = 0,1,2, \dots, 5 \right \} = \left \{ \pm 1, \pm \frac{1}{2} \right \}. \]
Since $D_3^{(t)}$ is non-bipartite, we have $ -1 \notin \Spec(T(D_3^{(t)}))$.
Thus, we may write
\[ \Spec(T(D_3^{(t)})) = \left \{ 1^{(1)}, \frac{1}{2}^{(a)}, -\frac{1}{2}^{(b)} \right \} \]
for nonnegative integers $a,b$.
Since $D_3^{(t)}$ has $2t+1$ vertices and $\tr(T(D_3^{(t)})) = 0$, we obtain
\[ 2t+1 = 1+a+b, \qquad 1 + \frac{a}{2} - \frac{b}{2} = 0. \]
Solving this system, we obtain $(a, b) = (t-1, t+1)$, which proves the statement.
\end{proof}

The \emph{uniform theta graph}, denoted by $\Theta(t,n)$,
is defined as follows.
First, take $t$ pairwise vertex-disjoint path graphs on $n$ vertices and add two new vertices.
For each path graph, join one endpoint to the first new vertex and the other endpoint to the second new vertex.
The graph obtained in this way is the uniform theta graph $\Theta(t,n)$.

Figure~\ref{0810-2} depicts $\Theta(3,4)$.
Note that $\Theta(t,2)$ is precisely the graph called a \emph{book graph} by Jost et al.~\cite{jost2023petals}.
Besides $\Theta(t,2)$, uniform theta graphs include several familiar graphs as special cases.
In particular, $\Theta(t,1)$ is isomorphic to the complete bipartite graph $K_{2,t}$,
$\Theta(2,n)$ is isomorphic to the cycle graph $C_{2n+2}$,
and $\Theta(1,n)$ is isomorphic to the path graph $P_{n+2}$.

\begin{figure}[htb]
\centering
\begin{tikzpicture}
[scale = 0.5,
v/.style = {circle, fill = black, inner sep = 0.6mm},
u/.style = {circle, fill = white, inner sep = 0.0mm}
]
\node[v] (L) at (-5, 0) {};
\node[v] (11) at (-3, 1.5) {};
\node[v] (21) at (-1, 1.5) {};
\node[v] (31) at (1, 1.5) {};
\node[v] (41) at (3, 1.5) {};
\node[v] (12) at (-3, 0) {};
\node[v] (22) at (-1, 0) {};
\node[v] (32) at (1, 0) {};
\node[v] (42) at (3, 0) {};
\node[v] (13) at (-3, -1.5) {};
\node[v] (23) at (-1, -1.5) {};
\node[v] (33) at (1, -1.5) {};
\node[v] (43) at (3, -1.5) {};
\node[v] (R) at (5, 0) {};
\draw[line width = 1pt] (L) -- (11) -- (21) -- (31) -- (41) -- (R);
\draw[line width = 1pt] (L) -- (12) -- (22) -- (32) -- (42) -- (R);
\draw[line width = 1pt] (L) -- (13) -- (23) -- (33) -- (43) -- (R);
\end{tikzpicture}
\caption{The uniform theta graph $\Theta(3, 4)$} \label{0810-2}
\end{figure}

\begin{thm} \label{0722-3}
Let $t$ and $n$ be positive integers.
The uniform theta graph $\Theta(t,n)$ is $(2n+2)$-periodic.
In particular, $\Theta(t,2)$ is $6$-periodic.
\end{thm}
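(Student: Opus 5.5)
The plan is to imitate the proof of Theorem~\ref{0722-2}: trace the standard basis vectors directly with Lemma~\ref{0507-1} and never use eigenvalues. Let $L$ and $R$ be the two added vertices of $\Theta(t,n)$; both have degree $t$. Label the internal paths $1,\dots,t$, so that every internal vertex has degree $2$. Each internal path, together with $L$ and $R$, is an $L$--$R$ path with $n+1$ edges. By the observation after Figure~\ref{44}, a unit arrow on an arc whose terminus is an internal vertex simply moves forward one step. The case $t=1$ (where $L,R$ are leaves and $U\BM{e}_a=\BM{e}_{a^{-1}}$ at them) and the case $t=2$ (where $\frac{2}{t}-1=0$) fit the same computation, so I would not separate them.

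First I treat an arc $a$ with $t(a)=L$, lying on path $1$ by symmetry. I track it in four stages.
\begin{itemize}
\item Applying $U$ once gives weight $\frac{2}{t}$ on the first arc of each path $k\neq 1$ leaving $L$, and weight $\frac{2}{t}-1$ on $a^{-1}$.
\item After $n$ further steps these $t$ arrows move rigidly along their paths. They arrive simultaneously on the $t$ arcs whose terminus is $R$, with the same weights.
\item The next application of $U$ at $R$ is the key computation. The outgoing arc on path $k$ receives $\frac{2}{t}\sum_j w_j - w_k$, where $w_j$ are the incoming weights. Here $\sum_j w_j=(t-1)\frac{2}{t}+\frac{2}{t}-1=1$, so path $k\neq 1$ gets $\frac{2}{t}-\frac{2}{t}=0$ and path $1$ gets $\frac{2}{t}-(\frac{2}{t}-1)=1$. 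Thus the state collapses to the single basis vector of the arc leaving $R$ along path $1$.
\item After $n$ more forward steps this arrow arrives at $a$ itself.
\end{itemize}
Hence $U^{2n+2}\BM{e}_a=\BM{e}_a$, since $1+n+1+n=2n+2$.

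Next, for an arbitrary arc $a$, among $\BM{e}_a,U\BM{e}_a,\dots,U^{n}\BM{e}_a$ one finds a basis vector whose arc has terminus $L$ or $R$, because the arrow only moves forward along degree-$2$ vertices. By the first step and the $L\leftrightarrow R$ symmetry, $U^{2n+2}$ fixes that vector. Since $U$ is invertible, $U^{2n+2}\BM{e}_a=\BM{e}_a$ follows as in Theorem~\ref{0722-2}, and therefore $U^{2n+2}=I$.

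The remaining point, which I expect to need the most care, is that the period is exactly $2n+2$ and not a proper divisor. For the arc $a$ above, the explicit trajectory shows that for $1\le k\le 2n+1$ the vector $U^k\BM{e}_a$ is never $\BM{e}_a$:
\begin{itemize}
\item for $1\le k\le n+1$ it is supported on arcs at distance $k-1$ from $L$ heading away from $L$, or on reversed arcs along path $1$; it is never $a$, because $a$ points into $L$ and no arrow returns to $L$ within these steps;
\item for $n+2\le k\le 2n+1$ it is a single basis vector on path $1$ that has not yet reached $a$.
\end{itemize}
One must check the degenerate case $t=2$ separately in the first range, where the reflected weight vanishes; there the graph is $C_{2n+2}$, whose period is $2n+2$. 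With this, $\Theta(t,n)$ is $(2n+2)$-periodic, and in particular $\Theta(t,2)$ is $6$-periodic.
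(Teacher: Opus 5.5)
Your proof is correct and follows the same route as the paper. You trace $\BM{e}_a$ for an arc into $L$ through the stages $U$, $U^n$, $U$, $U^n$, note the collapse at $R$ onto the single outgoing arc of the original path, and then reduce an arbitrary arc to one ending at $L$ or $R$. The only differences are these: you absorb $t=1,2$ into the same computation instead of citing the known periods of $P_{n+2}$ and $C_{2n+2}$, and you explicitly verify $U^k\BM{e}_a\neq\BM{e}_a$ for $1\le k\le 2n+1$, which the paper leaves implicit. Your separate check for $t=2$ in that step is unnecessary, since the $a$-coefficient of $U^k\BM{e}_a$ is zero in every case.
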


\begin{proof}
When $t = 1, 2$, we have $\Theta(1,n) = P_{n+2}$ and $\Theta(2,n) = C_{2n+2}$, respectively, and in both cases the graph is $(2n+2)$-periodic~\cite[Theorems~6.2 and~7.1]{kubota2021periodicity}.
Hence, in what follows, we consider the case $t\geq 3$.
Let $U := U(\Theta(t,n))$.
We show that every standard basis vector $\BM{e}_a \in \MB{C}^{\MC{A}(\Theta(t,n))}$ satisfies $U^{2n+2}\BM{e}_a=\BM{e}_a$.
The graph $\Theta(t,n)$ has exactly two vertices of degree $t$. Let the left one be $L$ and the right one be $R$.
First, we show that an arbitrary arc $a$ with $t(a)=L$ satisfies $U^{2n+2}\BM{e}_a=\BM{e}_a$.
By symmetry, we may assume that the initial state is as shown in Figure~\ref{0812-1} without loss of generality.
By repeatedly applying Lemma~\ref{0507-1}, this initial state returns to itself after $2n+2$ steps as shown in Figure~\ref{0812-2}.

\begin{figure}[htb]
\centering
\begin{tikzpicture}
[scale = 0.5,
v/.style = {circle, fill = black, inner sep = 0.6mm},
u/.style = {circle, fill = white, inner sep = 0.0mm}
]
\node[u] (w) at (-4.5, 1.4) {{\small \TB{$1$}}};
\node[u] at (-2, -0.5) {$\vdots$};
\node[u] at (2, -0.5) {$\vdots$};
\node[u] at (0, 2) {$\cdots$};
\node[u] at (0, 0.5) {$\cdots$};
\node[u] at (0, -2) {$\cdots$};
\node[v, label=left:$L$] (L) at (-5, 0) {};
\node[v] (11) at (-3, 2) {};
\node[v] (21) at (-1, 2) {};
\node[v] (31) at (1, 2) {};
\node[v] (41) at (3, 2) {};
\node[v] (12) at (-3, 0.5) {};
\node[v] (22) at (-1, 0.5) {};
\node[v] (32) at (1, 0.5) {};
\node[v] (42) at (3, 0.5) {};
\node[v] (13) at (-3, -2) {};
\node[v] (23) at (-1, -2) {};
\node[v] (33) at (1, -2) {};
\node[v] (43) at (3, -2) {};
\node[v, label=right:$R$] (R) at (5, 0) {};
\draw[line width = 1pt] (L) -- (11) -- (21);
\draw[line width = 1pt] (31) -- (41) -- (R);
\draw[line width = 1pt] (L) -- (12) -- (22);
\draw[line width = 1pt] (32) -- (42) -- (R);
\draw[line width = 1pt] (L) -- (13) -- (23);
\draw[line width = 1pt] (33) -- (43) -- (R);
%
\draw[draw=blue, line width=1pt, ->, shorten <=6pt, shorten >=6pt]
([xshift=-7pt]11.center) -- ([xshift=-7pt]L.center);
\end{tikzpicture}
\caption{The standard basis vector $\BM{e}_a$ with $t(a) = L$} \label{0812-1}
\end{figure}

Next, we consider the standard basis vector $\BM{e}_a$ corresponding to an arbitrary arc $a \in \MC{A}(\Theta(t,n))$.
Among $\BM{e}_a, U\BM{e}_a, \dots, U^{n+1}\BM{e}_a$, there exists a standard basis vector corresponding to an arc whose terminal vertex is $L$ or $R$.
Without loss of generality, we may assume that $U^j\BM{e}_a$ is the standard basis vector corresponding to an arc whose terminal vertex is $L$.
By the preceding argument, we have $U^{2n+2} U^j \BM{e}_a = U^j \BM{e}_a$, and hence $U^{2n+2}\BM{e}_a=\BM{e}_a$.
Therefore, the uniform theta graph $\Theta(t,n)$ is $(2n+2)$-periodic.
\end{proof}


\begin{figure}[htb]
\centering
\begin{tikzpicture}
[scale = 0.5,
v/.style = {circle, fill = black, inner sep = 0.6mm},
u/.style = {circle, fill = white, inner sep = 0.0mm}
]
\node[u] (w) at (-4.5, 1.4) {{\small \TB{$1$}}};
\node[u] at (-2, -0.5) {$\vdots$};
\node[u] at (2, -0.5) {$\vdots$};
\node[u] at (0, 2) {$\cdots$};
\node[u] at (0, 0.5) {$\cdots$};
\node[u] at (0, -2) {$\cdots$};
\node[v, label=left:$L$] (L) at (-5, 0) {};
\node[v] (11) at (-3, 2) {};
\node[v] (21) at (-1, 2) {};
\node[v] (31) at (1, 2) {};
\node[v] (41) at (3, 2) {};
\node[v] (12) at (-3, 0.5) {};
\node[v] (22) at (-1, 0.5) {};
\node[v] (32) at (1, 0.5) {};
\node[v] (42) at (3, 0.5) {};
\node[v] (13) at (-3, -2) {};
\node[v] (23) at (-1, -2) {};
\node[v] (33) at (1, -2) {};
\node[v] (43) at (3, -2) {};
\node[v, label=right:$R$] (R) at (5, 0) {};
\draw[line width = 1pt] (L) -- (11) -- (21);
\draw[line width = 1pt] (31) -- (41) -- (R);
\draw[line width = 1pt] (L) -- (12) -- (22);
\draw[line width = 1pt] (32) -- (42) -- (R);
\draw[line width = 1pt] (L) -- (13) -- (23);
\draw[line width = 1pt] (33) -- (43) -- (R);
%
\draw[draw=blue, line width=1pt, ->, shorten <=6pt, shorten >=6pt]
([xshift=-7pt]11.center) -- ([xshift=-7pt]L.center);
\end{tikzpicture}
\raisebox{31pt}{$\quad \overset{U}{\mapsto} \quad$}
\begin{tikzpicture}
[scale = 0.5,
v/.style = {circle, fill = black, inner sep = 0.6mm},
u/.style = {circle, fill = white, inner sep = 0.0mm}
]
\node[u] at (-2, -0.5) {$\vdots$};
\node[u] at (2, -0.5) {$\vdots$};
\node[u] at (0, 2) {$\cdots$};
\node[u] at (0, 0.5) {$\cdots$};
\node[u] at (0, -2) {$\cdots$};
\node[v, label=left:$L$] (L) at (-5, 0) {};
\node[v] (11) at (-3, 2) {};
\node[v] (21) at (-1, 2) {};
\node[v] (31) at (1, 2) {};
\node[v] (41) at (3, 2) {};
\node[v] (12) at (-3, 0.5) {};
\node[v] (22) at (-1, 0.5) {};
\node[v] (32) at (1, 0.5) {};
\node[v] (42) at (3, 0.5) {};
\node[v] (13) at (-3, -2) {};
\node[v] (23) at (-1, -2) {};
\node[v] (33) at (1, -2) {};
\node[v] (43) at (3, -2) {};
\node[v, label=right:$R$] (R) at (5, 0) {};
\draw[line width = 1pt] (L) -- (11) -- (21);
\draw[line width = 1pt] (31) -- (41) -- (R);
\draw[line width = 1pt] (L) -- (12) -- (22);
\draw[line width = 1pt] (32) -- (42) -- (R);
\draw[line width = 1pt] (L) -- (13) -- (23);
\draw[line width = 1pt] (33) -- (43) -- (R);
%
\draw[draw=red, line width=1pt, <-, shorten <=6pt, shorten >=6pt]
([yshift=7pt]11.center) -- ([yshift=7pt]L.center);
\draw[draw=blue, line width=1pt, ->, shorten <=6pt, shorten >=6pt]
([yshift=-7pt]L.center) -- ([yshift=-7pt]12.center);
\draw[draw=blue, line width=1pt, ->, shorten <=6pt, shorten >=6pt]
([yshift=-7pt]L.center) -- ([yshift=-7pt]13.center);
\end{tikzpicture} \\[10pt]
\raisebox{31pt}{$\quad \overset{U^n}{\mapsto} \quad$}
\begin{tikzpicture}
[scale = 0.5,
v/.style = {circle, fill = black, inner sep = 0.6mm},
u/.style = {circle, fill = white, inner sep = 0.0mm}
]
\node[u] at (-2, -0.5) {$\vdots$};
\node[u] at (2, -0.5) {$\vdots$};
\node[u] at (0, 2) {$\cdots$};
\node[u] at (0, 0.5) {$\cdots$};
\node[u] at (0, -2) {$\cdots$};
\node[v, label=left:$L$] (L) at (-5, 0) {};
\node[v] (11) at (-3, 2) {};
\node[v] (21) at (-1, 2) {};
\node[v] (31) at (1, 2) {};
\node[v] (41) at (3, 2) {};
\node[v] (12) at (-3, 0.5) {};
\node[v] (22) at (-1, 0.5) {};
\node[v] (32) at (1, 0.5) {};
\node[v] (42) at (3, 0.5) {};
\node[v] (13) at (-3, -2) {};
\node[v] (23) at (-1, -2) {};
\node[v] (33) at (1, -2) {};
\node[v] (43) at (3, -2) {};
\node[v, label=right:$R$] (R) at (5, 0) {};
\draw[line width = 1pt] (L) -- (11) -- (21);
\draw[line width = 1pt] (31) -- (41) -- (R);
\draw[line width = 1pt] (L) -- (12) -- (22);
\draw[line width = 1pt] (32) -- (42) -- (R);
\draw[line width = 1pt] (L) -- (13) -- (23);
\draw[line width = 1pt] (33) -- (43) -- (R);
%
\draw[draw=red, line width=1pt, ->, shorten <=6pt, shorten >=6pt]
([xshift=7pt]41.center) -- ([xshift=7pt]R.center);
\draw[draw=blue, line width=1pt, ->, shorten <=6pt, shorten >=6pt]
([yshift=-7pt]42.center) -- ([yshift=-7pt]R.center);
\draw[draw=blue, line width=1pt, ->, shorten <=6pt, shorten >=6pt]
([yshift=-7pt]43.center) -- ([yshift=-7pt]R.center);
\end{tikzpicture}
\raisebox{31pt}{$\quad \overset{U}{\mapsto} \quad$}
\begin{tikzpicture}
[scale = 0.5,
v/.style = {circle, fill = black, inner sep = 0.6mm},
u/.style = {circle, fill = white, inner sep = 0.0mm}
]
\node[u] (w) at (4.5, 1.4) {{\small \TB{$1$}}};
\node[u] at (-2, -0.5) {$\vdots$};
\node[u] at (2, -0.5) {$\vdots$};
\node[u] at (0, 2) {$\cdots$};
\node[u] at (0, 0.5) {$\cdots$};
\node[u] at (0, -2) {$\cdots$};
\node[v, label=left:$L$] (L) at (-5, 0) {};
\node[v] (11) at (-3, 2) {};
\node[v] (21) at (-1, 2) {};
\node[v] (31) at (1, 2) {};
\node[v] (41) at (3, 2) {};
\node[v] (12) at (-3, 0.5) {};
\node[v] (22) at (-1, 0.5) {};
\node[v] (32) at (1, 0.5) {};
\node[v] (42) at (3, 0.5) {};
\node[v] (13) at (-3, -2) {};
\node[v] (23) at (-1, -2) {};
\node[v] (33) at (1, -2) {};
\node[v] (43) at (3, -2) {};
\node[v, label=right:$R$] (R) at (5, 0) {};
\draw[line width = 1pt] (L) -- (11) -- (21);
\draw[line width = 1pt] (31) -- (41) -- (R);
\draw[line width = 1pt] (L) -- (12) -- (22);
\draw[line width = 1pt] (32) -- (42) -- (R);
\draw[line width = 1pt] (L) -- (13) -- (23);
\draw[line width = 1pt] (33) -- (43) -- (R);
%
\draw[draw=blue, line width=1pt, <-, shorten <=6pt, shorten >=6pt]
([yshift=7pt]41.center) -- ([yshift=7pt]R.center);
\end{tikzpicture} \\[10pt]
\raisebox{31pt}{$\quad \overset{U^n}{\mapsto} \quad$}
\begin{tikzpicture}
[scale = 0.5,
v/.style = {circle, fill = black, inner sep = 0.6mm},
u/.style = {circle, fill = white, inner sep = 0.0mm}
]
\node[u] (w) at (-4.5, 1.4) {{\small \TB{$1$}}};
\node[u] at (-2, -0.5) {$\vdots$};
\node[u] at (2, -0.5) {$\vdots$};
\node[u] at (0, 2) {$\cdots$};
\node[u] at (0, 0.5) {$\cdots$};
\node[u] at (0, -2) {$\cdots$};
\node[v, label=left:$L$] (L) at (-5, 0) {};
\node[v] (11) at (-3, 2) {};
\node[v] (21) at (-1, 2) {};
\node[v] (31) at (1, 2) {};
\node[v] (41) at (3, 2) {};
\node[v] (12) at (-3, 0.5) {};
\node[v] (22) at (-1, 0.5) {};
\node[v] (32) at (1, 0.5) {};
\node[v] (42) at (3, 0.5) {};
\node[v] (13) at (-3, -2) {};
\node[v] (23) at (-1, -2) {};
\node[v] (33) at (1, -2) {};
\node[v] (43) at (3, -2) {};
\node[v, label=right:$R$] (R) at (5, 0) {};
\draw[line width = 1pt] (L) -- (11) -- (21);
\draw[line width = 1pt] (31) -- (41) -- (R);
\draw[line width = 1pt] (L) -- (12) -- (22);
\draw[line width = 1pt] (32) -- (42) -- (R);
\draw[line width = 1pt] (L) -- (13) -- (23);
\draw[line width = 1pt] (33) -- (43) -- (R);
%
\draw[draw=blue, line width=1pt, ->, shorten <=6pt, shorten >=6pt]
([xshift=-7pt]11.center) -- ([xshift=-7pt]L.center);
\end{tikzpicture}
\caption{Proof of $U^{2n+2}\BM{e}_a=\BM{e}_a$.
For arrows whose weights are not indicated, blue arrows have weight $\frac{2}{t}$, while red arrows have weight $\frac{2}{t}-1$.} \label{0812-2}
\end{figure}


\begin{cor} \label{0820-2}
We have
\[ \Spec(T(\Theta(t,2))) = \left\{ 1^{(1)}, \frac{1}{2}^{(t)}, -\frac{1}{2}^{(t)}, -1^{(1)} \right\}. \]
\end{cor}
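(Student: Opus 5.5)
The plan mirrors the proof of Corollary~\ref{0901-1}. By Theorem~\ref{0722-3}, $U(\Theta(t,2))^6 = I$, so every eigenvalue of $U(\Theta(t,2))$ is a sixth root of unity $e^{\frac{2k\pi}{6}i}$. Since $\Theta(t,2)$ is connected, Theorem~\ref{0820-1} applies: every eigenvalue $\lambda$ of $T(\Theta(t,2))$ yields the eigenvalue $e^{i\arccos\lambda}$ of $U$. Hence $\lambda \in \{\cos\frac{2k\pi}{6}\} = \{\pm 1, \pm\frac12\}$.

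Next we pin down the multiplicities of $\pm 1$.

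\begin{itemize}
\item The graph is connected, so $1$ is a simple eigenvalue of $T$, with eigenvector $D^{1/2}\one$ by Lemma~\ref{0821-1}.
\item The graph is bipartite: one side is $L$, $R$ and the vertices at odd positions of the paths, the other side is the rest. Equivalently, every cycle has length $6$. So the spectrum of $T$ is symmetric about $0$.
\item By symmetry and connectedness, $-1$ is also a simple eigenvalue.
\item By symmetry, $\frac12$ and $-\frac12$ have a common multiplicity $m$.
\end{itemize}

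Finally we count vertices. $\Theta(t,2)$ has $2t+2$ vertices, so $2 + 2m = 2t+2$, giving $m = t$. This yields the stated spectrum.

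No step is a serious obstacle. The only point needing care is the symmetry of the bipartite spectrum. Write $T = \begin{pmatrix} 0 & B \\ B^\top & 0\end{pmatrix}$ for the bipartition. Conjugating by $\mathrm{diag}(I,-I)$ sends $T$ to $-T$, so $\lambda$ and $-\lambda$ have equal multiplicities. Alternatively, one can avoid bipartiteness by combining $\tr T = 0$ with $\tr T^2 = \sum_{x\sim y} \frac{2}{d_x d_y}$, which is a short computation.
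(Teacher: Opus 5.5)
Your argument is correct and is essentially the paper's proof. Theorems~\ref{0722-3} and~\ref{0820-1} confine $\Spec(T)$ to $\{\pm 1, \pm\frac{1}{2}\}$, bipartiteness makes the spectrum symmetric, and counting the $2t+2$ vertices forces the multiplicity of $\pm\frac{1}{2}$ to be $t$.

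One small slip: write $a_i$ and $b_i$ for the path vertices adjacent to $L$ and $R$ respectively. Then $L$ and $R$ are at distance $3$, so the parts are $\{L, b_1, \dots, b_t\}$ and $\{R, a_1, \dots, a_t\}$, not the part you describe. Your observation that every cycle has length $6$ still establishes bipartiteness correctly.
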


\begin{proof}
By Theorem~\ref{0722-3}, we have $U(\Theta(t,2))^6 = I$, and hence every eigenvalue of $U(\Theta(t,2))$ belongs to $\{ e^{\frac{2k \pi}{6}i} \mid  k = 0,1,2, \dots, 5 \}$.
Furthermore, Theorem~\ref{0820-1} implies that every eigenvalue of $T(\Theta(t,2))$ belongs to
\[ \left \{ \cos \frac{2k \pi}{6} \, \middle| \, k = 0,1,2, \dots, 5 \right \} = \left \{ \pm 1, \pm \frac{1}{2} \right \}. \]
Since $\Theta(t,2)$ is bipartite, the eigenvalues of its normalized adjacency matrix are symmetric about $0$.
Thus, we may write
\[ \Spec(T(\Theta(t,2))) = \left \{ 1^{(1)}, \frac{1}{2}^{(a)}, -\frac{1}{2}^{(a)}, -1^{(1)} \right \} \]
for some nonnegative integer $a$.
Since $\Theta(t,2)$ has $2t+2$ vertices, we have $2t+2 = 2a+2$, and hence $a=t$.
\end{proof}

In the next section, we prove that the only connected graphs that induce a $6$-periodic Grover walk are the Dutch windmill graphs $D_3^{(t)}$ and the uniform theta graphs $\Theta(t,2)$.

\section{Classification of $6$-periodic graphs} \label{S4}

Let $G=(V,E)$ be a graph.
The matrix defined by $\MC{L}(G) = \MC{L} := I-T\in\MB{C}^{V \times V}$ is called the \emph{normalized Laplacian matrix} of $G$.
Since the spectrum of the normalized adjacency matrix $T$ is contained in $[-1,1]$, the spectrum of the normalized Laplacian matrix $\MC{L}$ is contained in $[0,2]$.
Spectral gaps of the normalized Laplacian matrix are usually measured from the endpoints $0$ and $2$.
On the other hand, Jost et al.~\cite{jost2023petals} defined the \emph{spectral gap from $1$} by
\[ \varepsilon := \min_{\lambda \in \Spec(\MC{L}(G))} |1 - \lambda|. \]
Remarkably, Jost et al.\ completely characterized the graphs that attain the upper bound for the spectral gap from $1$:

\begin{thm}[{\cite[Theorem~1]{jost2023petals}}] \label{0722-1}
For any connected graph $G = (V, E)$ with $|V| \geq 3$,
\[ \varepsilon \leq \frac{1}{2}. \]
Moreover, equality is achieved if and only if $G$ is either a petal graph $D_3^{(t)}$ or a book graph $\Theta(t, 2)$.
\end{thm}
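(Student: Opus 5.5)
This statement is cited from Jost et al.~\cite{jost2023petals}, and the paper does not reprove it. Still, here is how I would prove it directly. In terms of $T = I - \MC{L}$, the claim is that some eigenvalue $\mu$ of $T(G)$ has $|\mu| \le \frac12$. Equality $\varepsilon = \frac12$ should force $\Spec(T(G)) \subset \{\pm 1, \pm \frac12\}$ and then pin down the graph.

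For the bound, I would use the trace identities
\[ \tr T = 0, \qquad \tr T^2 = \sum_{x \in V} \sum_{y \sim x} \frac{1}{d_x d_y} = 2\sum_{\{x,y\}\in E}\frac{1}{d_xd_y}. \]
Suppose every eigenvalue satisfies $|\mu| > \frac12$. Write $\mu_1 = 1$, and let $p$ be the number of eigenvalues with $\mu \le -\frac12$, counting $-1$ if $G$ is bipartite.
\begin{itemize}
\item The condition $\tr T = 0$ balances the positive and negative eigenvalues, so roughly half of the $n$ eigenvalues are large negative ones.
\item Then $\tr T^2 > n/4$, and in fact somewhat more, since the eigenvalues cannot all have absolute value exactly $\frac12$.
\item This must be compared with $2\sum_{\{x,y\}\in E} 1/(d_xd_y)$, which is at most $n/2$-ish. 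That comparison alone is not sharp enough.
\end{itemize}
So trace arguments alone likely fail. The real tool I expect to need is an explicit test vector: a vector $f$ supported near a vertex or an edge with small Rayleigh-type quotient $\|Tf\|/\|f\|$. For a symmetric matrix, $\|Tf\| \le \frac12\|f\|$ already forces an eigenvalue in $[-\frac12,\frac12]$.

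The natural test vectors are the following.
\begin{itemize}
\item Take two neighbours $y, z$ of a common vertex with $d_y = d_z$, and set $f = \sqrt{d_y}\,\BM{e}_y - \sqrt{d_z}\,\BM{e}_z$. This cancels at the common neighbour.
\item More generally, take a vector supported on an edge, in the spirit of the construction $(D^{1/2}\BM{e}_x - \cdots)$.
\end{itemize}
One then computes $\|D^{-1/2}AD^{-1/2}f\|^2$ and optimizes over local configurations. The claim to aim for is that for any connected $G$ with $|V|\ge 3$, some local choice gives $\|Tf\|^2 \le \frac14\|f\|^2$. The case analysis would split on whether $G$ contains a vertex of degree $\ge 2$ with two neighbours sharing few other neighbours, and on the degrees.

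For the equality case, every such local estimate must be tight. Translating tightness into structure should give the following: every edge lies in a triangle of a specific form (petals), or every vertex of some bipartition class has degree $2$ with all paths of length $3$ between two hubs (books).

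I expect the main obstacle to be finding test vectors that work uniformly for all graphs and whose equality cases are this rigid. This is precisely the content of Jost et al.'s paper, which is why the present paper invokes Theorem~\ref{0722-1} as a black box rather than reproving it.
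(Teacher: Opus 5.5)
The paper gives no proof of this statement. It is quoted from \cite[Theorem~1]{jost2023petals} and used as a black box in Section~\ref{S4}, so your opening remark is accurate, and if your answer is just that citation, it coincides with the paper.

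The direct argument you then outline is a plan, not a proof, and its central step is missing. You never show that every connected $G$ with $|V|\ge 3$ admits $f\neq 0$ with $\|Tf\|^2\le\frac14\|f\|^2$. The test vectors you propose cannot do this uniformly. For $f=D^{1/2}(\BM{e}_y-\BM{e}_z)$ one has $(Tf)_x=d_x^{-1/2}(A_{x,y}-A_{x,z})$. Hence $\|Tf\|^2=\sum_{x\in N(y)\triangle N(z)}d_x^{-1}$ and $\|f\|^2=d_y+d_z$; the cancellation at common neighbours needs no assumption $d_y=d_z$. This settles the regular case: with $c=|N(y)\cap N(z)|\ge 1$, the ratio is $(d-c)/d^2\le(d-1)/d^2\le\frac14$. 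It fails on the path with vertices $1,2,3,4$ in order, which is $\Theta(1,2)$ and therefore one of the extremal graphs. There every vector $D^{1/2}\BM{e}_v$ or $D^{1/2}(\BM{e}_y\pm\BM{e}_z)$ has $\|Tf\|^2/\|f\|^2\ge\frac13$. The value $\frac14$ is attained, for instance, by $D^{1/2}(2\BM{e}_1-\BM{e}_3)$, whose weights are unequal and which does \emph{not} cancel at the common neighbour $2$. So your announced case analysis would need weighted or non-local test vectors, plus an argument that they cover every connected graph. That is precisely the content of Jost et al., and the proposal does not supply it.

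The equality part has a second gap. From $\varepsilon=\frac12$ you only get $\Spec(T)\subset[-1,-\frac12]\cup[\frac12,1]$ with some eigenvalue equal to $\pm\frac12$. The conclusion $\Spec(T)\subset\{\pm1,\pm\frac12\}$ is a consequence of the classification, not an input you may assume. This matters because the paper's direct tools for pinning down $\Theta(t,2)$, Corollaries~\ref{0803-1} and~\ref{0803-2}, come from a Hoffman-type polynomial in $T$. They need the exact spectrum $\{\pm 1^{(1)},\pm\frac{1}{2}^{(a)}\}$ and are unavailable under the hypothesis $\varepsilon=\frac12$ alone. In Section~\ref{S4} the paper uses only the reverse implication: a spectrum in $\{\pm1,\pm\frac12\}$ containing $-\frac12$ gives $\varepsilon=\frac12$.

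Likewise, ``every such local estimate must be tight'' is not what equality gives. Only the vectors for which you have actually proved $\|Tf\|^2\le\frac14\|f\|^2$ become tight. The usable information is that $T^2\succeq\frac14 I$ then forces $T^2f=\frac14 f$ for those vectors. Turning that eigen-equation into the petal or book structure, case by case, is the real work of the equality characterization, and the proposal does not attempt it.
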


The graphs that induce a 6-periodic Grover walk are essentially determined by the result of Jost et al.

\begin{thm}
The only connected graphs that induce a 6-periodic Grover walk are the Dutch windmill graphs $D_3^{(t)}$ with $t\geq 2$ and the uniform theta graphs $\Theta(t,2)$ with $t\geq 1$.
\end{thm}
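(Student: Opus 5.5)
The plan is to prove the two directions separately. Sufficiency is already available: Theorem~\ref{0722-2} shows that $D_3^{(t)}$ is $6$-periodic for $t \geq 2$, and Theorem~\ref{0722-3} shows that $\Theta(t,2)$ is $6$-periodic for $t \geq 1$. So the work is in necessity. Let $G=(V,E)$ be connected and $6$-periodic. Then $U(G)^6=I$, so every eigenvalue of $U(G)$ lies in $\{e^{2k\pi i/6} \mid k=0,\dots,5\}$.

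By Theorem~\ref{0820-1}, each $\lambda \in \Spec(T(G))$ contributes $e^{\pm i\arccos\lambda}$ to $\Spec(U(G))$. Since $\arccos \lambda \in [0,\pi]$, it follows that $\arccos\lambda \in \{0,\pi/3,2\pi/3,\pi\}$, so $\lambda \in \{\pm 1, \pm \frac12\}$, exactly as in the proofs of Corollaries~\ref{0901-1} and~\ref{0820-2}. Because $\MC{L}=I-T$, we get $\Spec(\MC{L}(G)) \subset \{0,\frac12,\frac32,2\}$. Every one of these values is at distance at least $\frac12$ from $1$, so the spectral gap from $1$ satisfies $\varepsilon \geq \frac12$.

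Next I treat small graphs. The case $|V|=1$ has no arcs and is excluded, since it has no meaningful period of $6$. For $G=K_2$, the matrix $U$ simply swaps the two arcs, so the period is $2$, not $6$. Hence $|V| \geq 3$. Theorem~\ref{0722-1} then forces $\varepsilon = \frac12$, and the equality case says $G$ is $D_3^{(t)}$ or $\Theta(t,2)$ for some $t\geq 1$.

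Finally I remove the degenerate member. $D_3^{(1)} \cong C_3$ has period $3$, not $6$, so in the windmill family $t \geq 2$ is required. All $\Theta(t,2)$ with $t\geq1$ remain; for example $\Theta(1,2)=P_4$ has period $6$ by Theorem~\ref{0722-3}.

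The only delicate points are this bookkeeping: excluding graphs whose period is a proper divisor of $6$ (namely $K_2$ and $C_3$), and checking that the hypothesis $|V|\geq 3$ of Theorem~\ref{0722-1} holds. The spectral reduction itself is routine given the spectral mapping theorem.
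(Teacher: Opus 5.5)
Your proof is correct and follows the paper's route: you use the spectral mapping theorem to get $\Spec(T(G))\subset\{\pm1,\pm\frac12\}$, invoke the Jost et al.\ characterization, and discard $K_2$ and $C_3=D_3^{(1)}$. The only difference is that you get $\varepsilon=\frac12$ by squeezing $\varepsilon\ge\frac12$ against the Jost upper bound, whereas the paper uses $\tr T=0$ and Perron--Frobenius to show $-\frac12\in\Spec(T(G))$ directly; your version is a harmless simplification.
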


\begin{proof}
Let $G = (V, E)$ be a connected $6$-periodic graph.
First, since the only connected graph on two vertices,
namely $P_2$, is $2$-periodic~\cite{kubota2021periodicity}, we may assume that $|V|\geq 3$.
The spectrum of the time evolution matrix of $G$ must satisfy
$\Spec(U(G)) \subset \{e^{\frac{2k \pi}{6}i} \mid k = 0, \dots ,5\}$,
and hence Theorem~\ref{0820-1} implies that
\[ \Spec(T(G)) \subset \left\{ \cos \frac{2k}{6} \pi \, \middle | \, k = 0, \dots, 5 \right\} = \left\{\pm 1, \pm \frac{1}{2} \right\}. \]
Since $G$ is connected, $T(G)$ has $1$ as a simple eigenvalue.
Moreover, by the Perron--Frobenius theorem, the multiplicity of the eigenvalue $-1$ is at most one.
Since $|V|\geq 3$ and $\tr T(G)=0$,
it follows that $-\frac{1}{2}$ must be an eigenvalue of $T(G)$.
This implies that
\[ \left\{ 0, \frac{3}{2} \right\} \subset \Spec(\MC{L}(G)) \subset \left\{ 0, \frac{1}{2}, \frac{3}{2}, 2 \right\}. \]
In particular, the spectral gap from $1$ of the graph $G$ is $\varepsilon = \frac{1}{2}$.
Therefore, by Theorem~\ref{0722-1},
the graph $G$ must be either $D_3^{(t)}$ or $\Theta(t,2)$.
Conversely, the graphs $D_3^{(t)}$ and $\Theta(t,2)$ are 6-periodic by Theorems~\ref{0722-2} and~\ref{0722-3}, except for $D_3^{(1)}$.
\end{proof}

\section{A more direct characterization of graphs with normalized adjacency eigenvalues in $\{ \pm 1, \pm \frac{1}{2}\}$}

In the previous section, we showed that the only connected graphs that induce a 6-periodic Grover walk are the Dutch windmill graphs $D_3^{(t)}$ and the uniform theta graphs $\Theta(t,2)$.
The proof relies essentially on the result of Jost et al. concerning the spectral gap from $1$ of the normalized Laplacian matrix.
However, since the result of Jost et al.\ is established in a highly general setting, using it to determine $6$-periodic graphs is somewhat indirect.
Therefore, in this section, we give a more direct proof of this characterization based on the eigenvalues of the normalized adjacency matrix $T$.

Let $G=(V,E)$ be a connected graph that is 6-periodic,
and suppose that $|V| \geq 3$.
Then the eigenvalues of its normalized adjacency matrix $T = T(G)$ must satisfy
\[ \Spec(T) \subset \left \{\pm1,\pm\frac{1}{2} \right\}. \]

First, consider the case where $-1 \notin \Spec(T)$.
Let the multiplicities of the eigenvalues be
\[ \Spec(T) = \left\{ 1^{(1)}, \frac{1}{2}^{(a)},
-\frac{1}{2}^{(b)} \right\} \]
for nonnegative integers $a, b$.
Since the sum of the eigenvalues of $T$ is zero, it follows that $b=a+2$.
Therefore, the spectrum of the normalized Laplacian matrix $\MC{L} = \MC{L}(G)$ is given by
\[  \Spec(\MC{L}) = \left\{ 0^{(1)}, \frac{1}{2}^{(a)}, \frac{3}{2}^{(a+2)} \right\}. \]
By Corollary~\ref{0901-1}, this is the same spectrum as that of the normalized Laplacian matrix of the friendship graph, denoted by $D_3^{(a+1)}$ in this paper.
Graphs with such a normalized Laplacian spectrum were characterized by Berman et al.~\cite{berman2018family}, and $G$ is isomorphic to the friendship graph $D_3^{(a+1)}$.

Next, we consider the case $-1 \in \Spec(T)$.
In this case, since the graph $G$ is bipartite,
we may write
\[ \Spec(T) = \left\{ 1^{(1)}, \frac{1}{2}^{(a)}, -\frac{1}{2}^{(a)}, -1^{(1)} \right\} \]
for a positive integer $a$.
We will show that such a graph must be the uniform theta graph $\Theta(a,2)$.

\begin{lem} \label{0729-1}
Let $G = (V, E)$ be a connected bipartite graph with partite sets $V_1$ and $V_2$, and let $m = |E|$.
For $i = 1,2$, let $D_i$ denote the block of the degree matrix corresponding to the partite set $V_i$.
That is,
\[ D= \begin{bmatrix} D_1 & O \\ O & D_2 \end{bmatrix}. \]
Then, the normalized eigenvector of $T$ corresponding to the eigenvalue $1$ is 
\[ \frac{1}{\sqrt{2m}}D^{1/2} \one_V. \]
Similarly, the normalized eigenvector of $T$ corresponding to the eigenvalue $-1$ is
\begin{equation} \label{0727-1}
\frac{1}{\sqrt{2m}} \begin{bmatrix} D_1^{1/2} \one_{V_1} \\ -D_2^{1/2} \one_{V_2} \end{bmatrix}
\end{equation}
\end{lem}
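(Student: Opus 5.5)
The plan is a direct verification that each vector is an eigenvector, followed by normalization and a uniqueness argument. Put $T = D^{-1/2} A D^{-1/2}$. For $\mathbf{u} := D^{1/2}\one_V$, Lemma~\ref{0821-1} gives
\[ T\mathbf{u} = D^{-1/2} A \one_V = D^{-1/2} D \one_V = D^{1/2}\one_V = \mathbf{u}. \]
So $\mathbf{u}$ is an eigenvector for the eigenvalue $1$. Its squared norm is
\[ \|\mathbf{u}\|^2 = \sum_{x \in V} d_x = 2m, \]
by the handshake lemma. Dividing by $\sqrt{2m}$ therefore gives a unit vector.

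For the eigenvalue $-1$, we use the bipartite block form
\[ A = \begin{bmatrix} O & B \\ B^{\top} & O \end{bmatrix}, \]
which holds because there are no edges inside $V_1$ or inside $V_2$. Applying Lemma~\ref{0821-1} blockwise gives $B\one_{V_2} = D_1\one_{V_1}$ and $B^{\top}\one_{V_1} = D_2\one_{V_2}$. Now let $\mathbf{w} := D^{1/2} \mathbf{s}$ with $\mathbf{s} = \begin{bmatrix} \one_{V_1} \\ -\one_{V_2}\end{bmatrix}$. Then
\[ T\mathbf{w} = D^{-1/2} A \mathbf{s} = D^{-1/2}\begin{bmatrix} -B\one_{V_2} \\ B^{\top}\one_{V_1}\end{bmatrix} = D^{-1/2}\begin{bmatrix} -D_1\one_{V_1} \\ D_2\one_{V_2}\end{bmatrix} = -\mathbf{w}. \]
Since $\|\mathbf{w}\|^2 = \sum_x d_x = 2m$ as well, the vector in~\eqref{0727-1} is a unit eigenvector for $-1$.

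It remains to say why these are \emph{the} normalized eigenvectors, i.e.\ why the eigenspaces are one-dimensional. Since $G$ is connected, the eigenvalue $1$ of $T$ is simple; this is the Perron--Frobenius fact already used in Section~\ref{S4}. For a connected bipartite graph, the eigenvalue $-1$ is also simple. One way to see this is that conjugation by $\mathrm{diag}(I,-I)$ maps $T$ to $-T$, so it carries the $1$-eigenspace bijectively onto the $(-1)$-eigenspace. Hence both normalized eigenvectors are unique up to sign.

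I expect no real obstacle. The only point needing care is this uniqueness step, and the signature-conjugation argument settles it.
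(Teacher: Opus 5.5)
Your proof is correct and follows the paper's route. You apply Lemma~\ref{0821-1} blockwise to get $B\one_{V_2}=D_1\one_{V_1}$ and $B^{\top}\one_{V_1}=D_2\one_{V_2}$, verify $T\mathbf{w}=-\mathbf{w}$ directly, and normalize with the handshake lemma. Your extra simplicity argument, that conjugation by $\mathrm{diag}(I,-I)$ maps $T$ to $-T$ and so carries the $1$-eigenspace onto the $(-1)$-eigenspace, is a sound addition that the paper leaves implicit.
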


\begin{proof}
Since it is straightforward to verify the statement for the eigenvalue $1$ of $T$, we only show that the vector given in~\eqref{0727-1} is the normalized eigenvector of $T$ corresponding to the eigenvalue $-1$.
Since $G$ is bipartite, the adjacency matrix $A$ can be written as
\[ A= \begin{bmatrix} O & N \\ N^{\top} & O \end{bmatrix} \]
with the vertices ordered according to the partite sets $V_1$ and $V_2$.
Under this representation, Lemma~\ref{0821-1} yields
\begin{equation} \label{0727-3}
\begin{bmatrix} N \one_{V_2} \\ N^{\top} \one_{V_1} \end{bmatrix}
= \begin{bmatrix} O & N \\ N^{\top} & O \end{bmatrix} \begin{bmatrix} \one_{V_1} \\ \one_{V_2} \end{bmatrix}
= \begin{bmatrix} D_1 & O \\ O & D_2 \end{bmatrix} \begin{bmatrix} \one_{V_1} \\ \one_{V_2} \end{bmatrix}
= \begin{bmatrix} D_1 \one_{V_1} \\ D_2 \one_{V_2} \end{bmatrix}
\end{equation}
With these preparations, we prove the claim.
We have
\begin{align*}
T \begin{bmatrix} D_1^{1/2} \one_{V_1} \\ -D_2^{1/2} \one_{V_2} \end{bmatrix}
&= \begin{bmatrix} D_1^{-1/2} & O \\ O & D_2^{-1/2} \end{bmatrix}
\begin{bmatrix} O & N \\ N^{\top} & O \end{bmatrix}
\begin{bmatrix} D_1^{-1/2} & O \\ O & D_2^{-1/2} \end{bmatrix} \begin{bmatrix} D_1^{1/2} \one_{V_1} \\ -D_2^{1/2} \one_{V_2} \end{bmatrix} \\
&= \begin{bmatrix} -D_1^{-1/2}N \one_{V_2} \\ D_2^{-1/2}N^{\top} \one_{V_1} \end{bmatrix} \\
&= \begin{bmatrix} -D_1^{-1/2} D_1 \one_{V_1} \\ D_2^{-1/2} D_2 \one_{V_2} \end{bmatrix} \tag{by Equality~\eqref{0727-3}} \\
&= - \begin{bmatrix} D_1^{1/2} \one_{V_1} \\ -D_2^{1/2} \one_{V_2} \end{bmatrix}
\end{align*}
Moreover, by the handshaking lemma, the norm is
\[ \left\| \begin{bmatrix} D_1^{1/2} \one_{V_1} \\ -D_2^{1/2} \one_{V_2} \end{bmatrix} \right\| = \sqrt{\sum_{x \in V} \sqrt{d_x}^2} = \sqrt{2m}, \]
and hence the claim follows.
\end{proof}

The following theorem is obtained by adapting the well-known argument concerning the Hoffman polynomial~\cite{hoffman1963polynomial} for adjacency matrices of connected regular graphs to normalized adjacency matrices of connected bipartite graphs.

\begin{thm}
Let $G = (V, E)$ be a connected bipartite graph with partite sets $V_1$ and $V_2$, and let $m = |E|$.
For $i = 1,2$, let $D_i$ denote the block of the degree matrix corresponding to the partite set $V_i$.
Assume that the spectrum of the normalized adjacency matrix of $G$ is 
\[ \Spec(T(G))=\left\{ \pm 1^{(1)},\pm \lambda_1^{(m_1)},\dots,\pm \lambda_t^{(m_t)} \right \}. \]
Define the polynomial $f$ by $f(x)=(x^2-\lambda_1^2)(x^2-\lambda_2^2)\cdots(x^2-\lambda_t^2)$.
Then, we have
\[ f(T)= \frac{f(1)}{m}
\begin{bmatrix} D_1^{1/2}J_1D_1^{1/2} & O \\ O & D_2^{1/2}J_2D_2^{1/2} \end{bmatrix}, \]
where $J_i=\one_{V_i} \one_{V_i}^{\top}$ is the all-ones matrix for $i = 1, 2$.
\end{thm}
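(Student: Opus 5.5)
The plan is to use the spectral decomposition of the real symmetric matrix $T = T(G)$ and evaluate $f$ on each eigenspace. Since $T$ is symmetric, it has an orthonormal eigenbasis, and
\[ f(T) = \sum_{\mu \in \Spec(T)} f(\mu)\, P_\mu, \]
where $P_\mu$ is the orthogonal projection onto the eigenspace of $\mu$. By the definition of $f$, we have $f(\pm\lambda_j) = 0$ for every $j$, so every eigenspace other than those of $\pm 1$ contributes nothing. Because $f$ is even, $f(-1) = f(1)$. Hence
\[ f(T) = f(1)\,(P_1 + P_{-1}). \]
Both $1$ and $-1$ are simple eigenvalues by hypothesis, which is consistent with Lemma~\ref{0729-1}. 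Therefore $P_{\pm 1} = \BM{v}_{\pm}\BM{v}_{\pm}^{\top}$, where $\BM{v}_{\pm}$ are the normalized eigenvectors given in Lemma~\ref{0729-1}.

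Next, I write $\BM{u}_i := D_i^{1/2}\one_{V_i}$ for $i = 1,2$. Lemma~\ref{0729-1} then gives
\[ \BM{v}_+ = \frac{1}{\sqrt{2m}}\begin{bmatrix}\BM{u}_1\\ \BM{u}_2\end{bmatrix}, \qquad \BM{v}_- = \frac{1}{\sqrt{2m}}\begin{bmatrix}\BM{u}_1\\ -\BM{u}_2\end{bmatrix}. \]
Computing the outer products blockwise, the diagonal blocks of $\BM{v}_+\BM{v}_+^{\top}$ and $\BM{v}_-\BM{v}_-^{\top}$ agree, namely $\frac{1}{2m}\BM{u}_i\BM{u}_i^{\top}$. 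The off-diagonal blocks are $\pm\frac{1}{2m}\BM{u}_1\BM{u}_2^{\top}$ and its transpose, with opposite signs, so they cancel in the sum. This leaves
\[ P_1 + P_{-1} = \frac{1}{m}\begin{bmatrix}\BM{u}_1\BM{u}_1^{\top} & O\\ O & \BM{u}_2\BM{u}_2^{\top}\end{bmatrix}. \]
Finally, $\BM{u}_i\BM{u}_i^{\top} = D_i^{1/2}\one_{V_i}\one_{V_i}^{\top}D_i^{1/2} = D_i^{1/2}J_iD_i^{1/2}$, since $D_i^{1/2}$ is diagonal and hence symmetric. Multiplying by $f(1)$ gives the claimed formula.

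There is no real obstacle here. The only points needing care are that $f(-1) = f(1)$, which holds because $f$ is a polynomial in $x^2$, and that the eigenvalues $\pm 1$ are simple, so each projection has rank one. Simplicity is part of the hypothesis on $\Spec(T(G))$, and for connected bipartite $G$ it also follows from Perron--Frobenius. If some $\lambda_j$ coincided with another eigenvalue the formula would be unaffected, since $f$ still vanishes there. One should also note implicitly that $\lambda_j \neq \pm 1$, which is part of how the spectrum is listed.
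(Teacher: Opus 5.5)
Your proposal is correct and follows essentially the same route as the paper: both note that $f$ kills every eigenspace except those of $\pm 1$ and that $f(1)=f(-1)$, so $f(T)=f(1)(P_1+P_{-1})$. Both then expand the two rank-one projections built from the eigenvectors of Lemma~\ref{0729-1}, where the off-diagonal blocks cancel and leave $\frac{1}{m}\,\mathrm{diag}(D_1^{1/2}J_1D_1^{1/2},\,D_2^{1/2}J_2D_2^{1/2})$.
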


\begin{proof}
Let $n$ be the number of vertices of $G$.
Since $f(1)=f(-1)$ and $f(\pm \lambda_i)=0$ for every $i$, the spectrum of $f(T)$ is
\[ \Spec(f(T))= \left \{ f(1)^{(2)},0^{(n-2)} \right\}. \]
Since every eigenvector of $T$ is also an eigenvector of $f(T)$, denoting by $P$ the projection matrix of $f(T)$ corresponding to the eigenvalue $f(1)$, we obtain from Lemma~\ref{0729-1} that
\begin{align*}
P &= \left( \frac{1}{\sqrt{2m}} \begin{bmatrix} D_1^{1/2} \one_{V_1} \\ D_2^{1/2} \one_{V_2} \end{bmatrix} \right)
\left( \frac{1}{\sqrt{2m}} \begin{bmatrix} D_1^{1/2} \one_{V_1} \\ D_2^{1/2} \one_{V_2} \end{bmatrix} \right)^{\top} \\
&\hspace{2em} + \left( \frac{1}{\sqrt{2m}} \begin{bmatrix} D_1^{1/2} \one_{V_1} \\ -D_2^{1/2} \one_{V_2} \end{bmatrix} \right)
\left( \frac{1}{\sqrt{2m}} \begin{bmatrix} D_1^{1/2} \one_{V_1} \\ -D_2^{1/2} \one_{V_2} \end{bmatrix} \right)^{\top} \\
&= \frac{1}{m}
\begin{bmatrix} D_1^{1/2}J_1D_1^{1/2} & O \\ O & D_2^{1/2}J_2D_2^{1/2} \end{bmatrix}.
\end{align*}
Hence, the spectral decomposition of $f(T)$ is
\[ f(T)=f(1)P = \frac{f(1)}{m}
\begin{bmatrix} D_1^{1/2}J_1D_1^{1/2} & O \\ O & D_2^{1/2}J_2D_2^{1/2} \end{bmatrix} \]
and the statement of the theorem follows.
\end{proof}

\begin{cor} \label{0803-1}
Let $G = (V, E)$ be a connected bipartite graph with partite sets $V_1$ and $V_2$, and let $m = |E|$.
For $i = 1,2$, let $D_i$ denote the block of the degree matrix corresponding to the partite set $V_i$.
If the spectrum of the normalized adjacency matrix of $G$ is 
\[ \Spec(T(G))=\left\{ \pm 1^{(1)}, \pm \frac{1}{2}^{(a)}  \right \} \]
for a positive integer $a$, then we have
\[ T^2 = \frac{1}{4} I_V + \frac{3}{4m}
\begin{bmatrix} D_1^{1/2}J_1D_1^{1/2} & O \\ O & D_2^{1/2}J_2D_2^{1/2} \end{bmatrix}.
\]
\end{cor}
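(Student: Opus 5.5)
This is a direct specialization of the preceding theorem (the Hoffman-type identity for connected bipartite graphs) to the case where the only eigenvalues of $T$ other than $\pm 1$ are $\pm\frac{1}{2}$. The plan is to apply that theorem with $t=1$ and $\lambda_1 = \frac{1}{2}$ (of multiplicity $m_1 = a$), so that the polynomial becomes
\[ f(x) = x^2 - \frac{1}{4}. \]
The hypotheses of the theorem are exactly those of the corollary: $G$ is connected bipartite and its spectrum is $\{\pm 1^{(1)}, \pm \frac{1}{2}^{(a)}\}$.

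Next, evaluate both sides. The value at $1$ is $f(1) = 1 - \frac{1}{4} = \frac{3}{4}$. On the left, $f(T) = T^2 - \frac{1}{4} I_V$. The theorem then gives
\[ T^2 - \frac{1}{4} I_V = \frac{3}{4m}\begin{bmatrix} D_1^{1/2}J_1D_1^{1/2} & O \\ O & D_2^{1/2}J_2D_2^{1/2} \end{bmatrix}. \]
Moving $\frac{1}{4}I_V$ to the right-hand side yields the claimed identity.

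There is essentially no obstacle. The only point to check is that the theorem applies with a single pair $\pm\lambda_1$, and it does, because the spectrum consists precisely of $\pm 1$ (each simple) and $\pm\frac{1}{2}$. The condition $a \ge 1$ is not needed for the algebra; it only ensures that the case is nondegenerate. If $a=0$, then $G = P_2$, where $T^2 = I$, and the formula still holds with $m=1$.
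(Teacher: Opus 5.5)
Your proposal is correct and is exactly the intended argument. The paper states this corollary without a separate proof, as the immediate specialization of the preceding Hoffman-type theorem with $f(x)=x^2-\frac{1}{4}$ and $f(1)=\frac{3}{4}$. Your side remark on the degenerate case $a=0$ (the graph $P_2$, where both sides equal $I$) is also accurate.
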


Note that
\[ (D_i^{1/2} J_i D_i^{1/2})_{x,y} = \sqrt{d_x d_y} \]
for $x, y \in V_i$ and $i = 1,2$.

On the other hand, the entries of $T^2$ have a combinatorial interpretation.
Combining this with Corollary~\ref{0803-1} yields the following fundamental relation between the spectrum and the structural properties of a graph.
For a vertex $x$ of a graph $G$, let $N(x)$ denote the set of vertices adjacent to $x$.

\begin{cor} \label{0803-2}
Let $G = (V, E)$ be a connected bipartite graph, and let $m = |E|$.
If the spectrum of the normalized adjacency matrix of $G$ is 
\[ \Spec(T(G))=\left\{ \pm 1^{(1)}, \pm \frac{1}{2}^{(a)}  \right \} \]
for some positive integer $a$, then we have the following.
\begin{enumerate}[(i)]
\item For any vertex $x\in V$, we have
\[ \sum_{z \in N(x)} \frac{1}{d_z} = \frac{d_x}{4} + \frac{3d_x^2}{4m}. \]
\item For any distinct vertices $x, y \in V$ belonging to the same partite set, we have
\[ \sum_{z \in N(x) \cap N(y)} \frac{1}{d_z} = \frac{3 d_x d_y}{4m}. \]
\end{enumerate}
\end{cor}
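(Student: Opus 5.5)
The plan is to compare the $(x,y)$ entries of the two sides of the identity in Corollary~\ref{0803-1}, after computing $(T^2)_{x,y}$ combinatorially. Since $T = D^{-1/2}AD^{-1/2}$, we have
\[ (T^2)_{x,y} = \sum_{z \in V} \frac{A_{x,z}}{\sqrt{d_x d_z}} \cdot \frac{A_{z,y}}{\sqrt{d_z d_y}} = \frac{1}{\sqrt{d_x d_y}} \sum_{z \in N(x) \cap N(y)} \frac{1}{d_z}. \]
In particular, $(T^2)_{x,x} = \frac{1}{d_x}\sum_{z \in N(x)} \frac{1}{d_z}$.

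On the right-hand side of Corollary~\ref{0803-1}, the block-diagonal matrix has entry $\sqrt{d_xd_y}$ when $x,y$ lie in the same partite set $V_i$, as noted just after that corollary. Its entry is $0$ when $x$ and $y$ lie in different partite sets.

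For (i), take $y = x$. The diagonal entry of the right-hand side is $\frac14 + \frac{3 d_x}{4m}$. Equating this with $(T^2)_{x,x}$ and multiplying by $d_x$ gives
\[ \sum_{z \in N(x)} \frac{1}{d_z} = \frac{d_x}{4} + \frac{3 d_x^2}{4m}. \]

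For (ii), take distinct $x,y$ in the same partite set. Then $(I_V)_{x,y} = 0$, so the right-hand entry is $\frac{3\sqrt{d_xd_y}}{4m}$. Multiplying both sides by $\sqrt{d_xd_y}$ gives
\[ \sum_{z \in N(x)\cap N(y)} \frac{1}{d_z} = \frac{3 d_x d_y}{4m}. \]

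There is no real obstacle here. The only care needed is to note that $d_x > 0$ for every vertex, which holds because $G$ is connected with at least two vertices, so $D^{-1/2}$ is defined. One should also check that the hypotheses of Corollary~\ref{0803-1} are exactly the ones assumed here.
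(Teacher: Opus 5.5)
Your proposal is correct and follows essentially the same route as the paper. Both compute $(T^2)_{x,y} = \frac{1}{\sqrt{d_xd_y}}\sum_{z\in N(x)\cap N(y)} \frac{1}{d_z}$, compare it with the corresponding entry of the identity in Corollary~\ref{0803-1}, and multiply through by $d_x$ for (i) or by $\sqrt{d_xd_y}$ for (ii). Your remark that all degrees are positive is also justified, since the given spectrum forces $|V| = 2a+2 \geq 4$ and $G$ is connected.
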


\begin{proof}
Let $T = T(G)$.
For any vertices $x, y \in V$, we have
\[ (T^2)_{x,y}
= \sum_{z \in V} T_{x,z} T_{z,y}
= \sum_{z \in V} \frac{1}{\sqrt{d_x d_z}} A_{x,z} \frac{1}{\sqrt{d_z d_y}} A_{z,y}
= \frac{1}{\sqrt{d_x d_y}} \sum_{z \in N(x) \cap N(y)} \frac{1}{d_z}.
\]
If $x=y$, Corollary~\ref{0803-1} gives
\[ \frac{1}{d_x} \sum_{z \in N(x)} \frac{1}{d_z} = \frac{1}{4} + \frac{3}{4m} d_x.  \]
Multiplying both sides by $d_x$, we obtain~(i).
Also, if the two vertices $x$ and $y$ are distinct but belong to the same partite set, Corollary~\ref{0803-1} gives
\[ \frac{1}{\sqrt{d_x d_y}} \sum_{z \in N(x) \cap N(y)} \frac{1}{d_z}
= \frac{3}{4m} \sqrt{d_x d_y}, \]
and multiplying both sides by $\sqrt{d_xd_y}$ yields~(ii).
\end{proof}

Relying essentially only on Corollary~\ref{0803-2}, we now give a spectral characterization of the uniform theta graph $\Theta(t,2)$.

\begin{thm} \label{0903-2}
Let $G = (V, E)$ be a connected graph and let $t \geq 1$ be an integer.
Then, $G$ is isomorphic to the uniform theta graph $\Theta(t,2)$ if and only if
\[ \Spec(T(G)) = \left\{ 1^{(1)}, \frac{1}{2}^{(t)}, -\frac{1}{2}^{(t)}, -1^{(1)} \right\}. \]
\end{thm}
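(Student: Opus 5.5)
The ``only if'' direction is Corollary~\ref{0820-2}, so the work is in the ``if'' direction. Assume $\Spec(T(G)) = \{1^{(1)}, \frac12^{(t)}, -\frac12^{(t)}, -1^{(1)}\}$. Then $n = |V| = 2t+2$. Since $-1 \in \Spec(T)$ and $G$ is connected, $G$ is bipartite with partite sets $V_1, V_2$. Hence Corollary~\ref{0803-2} applies with $a=t$, and the plan is to extract the structure of $G$ from~(i) and~(ii) alone.

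\textbf{Step 1 (pendant vertices).} If $z$ has degree $1$ with neighbour $x$, then (i) applied at $z$ reads $1/d_x = \frac14 + \frac{3}{4m}$, so $d_x = 4m/(m+3)$. The integer solutions are $(m,d_x) = (1,1), (3,2), (9,3)$.
\begin{itemize}
\item $m=1$ gives $P_2$, which has $|V| = 2 \neq 2t+2$.
\item $m=3$ together with connectivity and $n = 2t+2$ forces $t=1$ and $G = P_4 = \Theta(1,2)$.
\item $m=9$ I expect to kill with (ii): apply it to $z$ and to another vertex $y$ in the same part as $z$. Their only possible common neighbour is $x$, which yields $1/3 = 3d_y/36$, so $d_y = 4$. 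Feeding this back into (i) at $x$ and at $y$, together with $n = 2t+2$, $m=9$, should give a contradiction. This is a finite check.
\end{itemize}
So from now on I assume $\delta(G) \ge 2$.

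\textbf{Step 2 (counting via (i)).} Summing (i) over $x \in V_1$, each $z \in V_2$ is counted $d_z$ times with weight $1/d_z$. Using $\sum_{x\in V_1} d_x = m$ and Cauchy--Schwarz, this gives
\[ |V_2| = \frac{m}{4} + \frac{3}{4m}\sum_{x \in V_1} d_x^2 \ \ge\ \frac{m}{4} + \frac{3m}{4|V_1|}, \]
and symmetrically with the roles of $V_1$ and $V_2$ swapped. On the other hand $\delta \ge 2$ gives $m \ge 2|V_1|$ and $m \ge 2|V_2|$. Writing $p = |V_1|$ and $q = |V_2|$, I will combine these four inequalities with $p+q = 2t+2$. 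The aim is to show the system is tight only in the configuration of $\Theta(t,2)$: $p = q = t+1$, $m = 2t+1$, and in each part one vertex of degree $t$ with the others of degree $2$. It may be necessary to add the trace identity $\sum_{\{x,y\}\in E} \frac{1}{d_xd_y} = \frac12\tr T^2 = 1 + \frac t4$ to close the gap.

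\textbf{Step 3 (the main obstacle).} I expect Step~2 not to determine the degree sequence outright, so this is the hardest part. The first thing to try is to use (ii), which says any two vertices in the same part have a common neighbour, together with (i) at a vertex $x$ of maximum degree $\Delta$ in $V_1$. By (ii), every other $y \in V_1$ shares a neighbour with $x$. I will then bound $\sum_{z\in N(x)} 1/d_z$ from both sides: from above using $d_z \ge 2$, and from below via the requirement that the $d_z$ cover all of $V_1$. The goal is to force every neighbour of $x$ to have degree exactly $2$ and every other vertex of $V_1$ to have degree $2$. The same argument on $V_2$ then finishes it.

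\textbf{Step 4 (reconstruction).} Once each part consists of one hub $L$ (resp.\ $R$) and vertices of degree $2$, the graph is determined. Each degree-$2$ vertex of $V_2$ is adjacent to $L$ and to one degree-$2$ vertex of $V_1$, since (ii) between distinct degree-$2$ vertices of the same part requires total weight $3\cdot 4/(4m) = 3/m$, which pins down the adjacencies. This reassembles $\Theta(t,2)$: $t$ paths of length $2$ joining $L$ and $R$.
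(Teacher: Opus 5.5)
Your Step~1 is essentially the paper's $\delta=1$ case, and the $m=9$ subcase does close as a finite check. Note that (i) at $x$ holds identically, so the contradiction comes from elsewhere: (i) at a degree-$4$ vertex $y$ forces its three other neighbours to have reciprocal degrees summing to $2$. That gives $y$ a pendant neighbour, which contradicts your list, since with $m=9$ the neighbour of a pendant vertex must have degree $3$.

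The genuine gap is in Steps~2--3. They carry the whole ``if'' direction and contain no working argument.

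\textbf{Step 2.} The target is miscomputed: $\Theta(t,2)$ has $m=3t$ (each part has degree sum $t+2t$), not $2t+1$. More seriously, no tightness argument can single out $\Theta(t,2)$. At $\Theta(t,2)$ the Cauchy--Schwarz slack is $q-\bigl(\frac m4+\frac{3m}{4p}\bigr)=\frac{(t-2)^2}{4(t+1)}$ and $m-2p=t-2$, so for $t\ge 3$ none of your four inequalities is tight there.

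Conversely, every cubic bipartite graph on $18$ vertices (so $m=27$) satisfies (i) at every vertex, since both sides equal $1$. Hence it satisfies all of Step~2, with equality in both Cauchy--Schwarz bounds. It also satisfies the trace identity (with $t=8$), which is implied by (i) and adds nothing. So nothing built from (i) can even exclude $\delta\ge 3$. Condition (ii) must be used locally, because summing (ii) over $y$ merely reproduces (i). In this example (ii) would require $|N(x)\cap N(y)|=3/4$.

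\textbf{Step 3.} The mechanism points the wrong way. Applying (i) at $x$ with $d_z\ge 2$ gives only $\Delta\le m/3$, with equality exactly when all neighbours of $x$ have degree $2$. So you need the reverse inequality $\Delta\ge m/3$. The covering condition $\sum_{z\in N(x)}(d_z-1)\ge |V_1|-1$ pushes the $d_z$ up and $\sum_{z\in N(x)}1/d_z$ down, so it yields no lower bound on $\Delta$.

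\textbf{Missing idea.} Work at a vertex of \emph{minimum} degree, as the paper does.
\begin{enumerate}[(a)]
\item First prove $\delta\le 2$. The inequality $\frac{n+6}{4}=\tr T^2\le n/\delta$ gives $\delta\le 3$. The case $\delta=3$ is excluded by (i) at a degree-$3$ vertex $u$, which forces a neighbour $p_1$ of degree $3$, together with (ii) on $p_1,p_2$. Their common neighbour $u$ gives $1/d_{p_2}\le 27/(4m)$, which then forces $d_{p_2}\le 12/5$.
\item At a degree-$2$ vertex $u$ with neighbours $p_1,p_2$, (i) reads $\frac{1}{d_{p_1}}+\frac{1}{d_{p_2}}=\frac12+\frac3m$, forcing $d_{p_1}\in\{2,3\}$. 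Then (ii) on $p_1,p_2$ excludes $3$, so $d_{p_1}=2$ and $d_{p_2}=m/3$.
\item Further local uses of (i) and (ii) give degree $m/3$ to the other neighbour $r$ of $p_1$, and degree $2$ to all remaining neighbours of $r$ and of $p_2$.
\item The handshake lemma then shows there are no other vertices.
\end{enumerate}
This is precisely the input your Step~4 presupposes.
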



\begin{proof}
The necessity has already been shown in Corollary~\ref{0820-2}.
We prove the converse.
Let $n = |V|$, $m = |E|$, and let $\delta$ be the minimum degree of $G$.
\begin{adjustwidth}{1em}{1em}
\begin{claim}
We have $\delta \leq 2$.
\end{claim}
\begin{proof}[Proof of Claim]
\renewcommand{\qedsymbol}{$\blacklozenge$}
First, we show that $\delta \leq 3$, and then that $\delta \neq 3$.
Let $T = T(G)$.
For any vertex $x \in V$, we have
\[ (T^2)_{x,x} = \frac{1}{d_x} \sum_{z \in N(x)} \frac{1}{d_z}
\leq \frac{1}{d_x} \sum_{z \in N(x)} \frac{1}{\delta} = \frac{1}{\delta}. \]
Hence,
\begin{equation} \label{0804-1}
\tr(T^2) = \sum_{x \in V} (T^2)_{x,x}
\leq \sum_{x \in V} \frac{1}{\delta} = \frac{n}{\delta}.
\end{equation}
On the other hand, since the trace of $T^2$ is equal to the sum of its eigenvalues, we have
\begin{equation} \label{0804-2}
\tr(T^2) = 1^2 + t \left( \frac{1}{2} \right)^2 + t \left( - \frac{1}{2} \right)^2 + (-1)^2 = \frac{2t+8}{4} = \frac{n+6}{4}.
\end{equation}
Inequality~\eqref{0804-1} and Equality~\eqref{0804-2} yield
\[ \frac{n+6}{4} \leq \frac{n}{\delta}, \]
and hence
\[ \delta \leq \frac{4n}{n+6} = 4 - \frac{24}{n+6} < 4. \]
Thus, we have $\delta \leq 3$.

Next, we assume that $\delta = 3$ and derive a contradiction.
Let $u$ be a vertex of minimum degree, and write $N(u) = \{ p_1, p_2, p_3 \}$.
Without loss of generality, we may assume that $d_{p_1} \leq d_{p_2} \leq d_{p_3}$.
Applying Corollary~\ref{0803-2}(i) to the vertex $u$, we obtain
\begin{align}
\frac{3}{4} + \frac{3 \cdot 3^2}{4m}
&= \frac{1}{d_{p_1}} + \frac{1}{d_{p_2}} + \frac{1}{d_{p_3}} \label{0805-1} \\
&\leq \frac{3}{d_{p_1}}, \notag
\end{align}
and hence
\[ d_{p_1} \leq \frac{4m}{m+9} = 4 - \frac{36}{m+9} < 4. \]
By the assumption $\delta = 3$, we must have $d_{p_1} = 3$.
Next, applying Corollary~\ref{0803-2}(ii) to the vertices $p_1$ and $p_2$, we obtain
\[ \sum_{z \in N(p_1) \cap N(p_2)} \frac{1}{d_z}
= \frac{3 d_{p_1} d_{p_2}}{4m} = \frac{9d_{p_2}}{4m}. \]
On the other hand, since 
\[ \sum_{z \in N(p_1) \cap N(p_2)} \frac{1}{d_z} \geq \frac{1}{d_u} = \frac{1}{3}, \]
this inequality yields
\[ \frac{1}{d_{p_2}} \leq \frac{27}{4m}. \]
This, together with Equality~\eqref{0805-1}, yields
\begin{equation} \label{0805-2}
\frac{1}{3} + \frac{1}{d_{p_2}} + \frac{1}{d_{p_3}} =
\frac{3}{4} + \frac{27}{4m} \geq \frac{3}{4} + \frac{1}{d_{p_2}}.
\end{equation}
On the other hand, since $d_{p_2} \leq d_{p_3}$, we have
\begin{equation} \label{0805-3}
\frac{1}{3} + \frac{1}{d_{p_2}} + \frac{1}{d_{p_3}}
\leq \frac{1}{3} + \frac{1}{d_{p_2}} + \frac{1}{d_{p_2}}
= \frac{1}{3} + \frac{2}{d_{p_2}}.
\end{equation}
Inequalities~\eqref{0805-2} and~\eqref{0805-3} yield $d_{p_2} \leq \frac{12}{5} < 3$, contradicting the assumption that the minimum degree $\delta$ is $3$.
\end{proof}
\end{adjustwidth}
First, we deal with the case $\delta = 1$.
Let $u$ be a vertex of minimum degree, and let $p$ be its unique neighbor.
Applying Corollary~\ref{0803-2}(i) to the vertex $u$, we obtain
\[ \frac{1}{d_p} = \frac{1}{4} + \frac{3}{4m}, \]
that is, $(d_p - 4)(m+3) = -12$.
Noting that $m+3>0$ and $d_p-4>-4$, the solutions of this equation are 
\[ (d_p, m) = (1,1), (2,3), (3,9). \]
If $(d_p, m) = (1,1)$, then $G$ is isomorphic to the complete graph $K_2$, contradicting $t \geq 1$.
If $(d_p, m) = (2,3)$, then $G$ is isomorphic to the path graph $P_4$, which is precisely the uniform theta graph $\Theta(1,2)$.
Next, we consider the case $(d_p,m)=(3,9)$.
Let $N(p) \setminus \{ u \} = \{q_1, q_2\}$.
For each $i \in \{ 1,2 \}$, applying Corollary~\ref{0803-2}(ii) to the vertices $u$ and $q_i$ yields
\[ \frac{1}{d_p} = \frac{3d_{q_i} d_u}{4m}. \]
Since $d_p = 3$ and $m = 9$ in the present case, it follows that $d_{q_i} = 4$.
Let $N(q_1)\setminus \{p\} = \{r_1, r_2, r_3\}$.
Applying Corollary~\ref{0803-2}(i) to the vertex $q_1$, we obtain
\[ \frac{1}{3} + \frac{1}{d_{r_1}} + \frac{1}{d_{r_2}} + \frac{1}{d_{r_3}} = \frac{4}{4} + \frac{3 \cdot 4^2}{4 \cdot 9},
\]
that is,
\begin{equation} \label{0903-1}
\frac{1}{d_{r_1}} + \frac{1}{d_{r_2}} + \frac{1}{d_{r_3}} = 2.
\end{equation}
Suppose that $d_{r_i} = 1$ for some $i \in \{1,2,3\}$.
Applying Corollary~\ref{0803-2}(i) to the vertex $r_i$, we have
\[ \frac{1}{4} = \frac{1}{d_{q_1}} = \frac{d_{r_i}}{4}+\frac{3d_{r_i}^2}{4m} = \frac{1}{3},
\]
a contradiction.
Therefore, $d_{r_i} \geq 2$ for every $i \in \{1,2,3\}$.
However, this implies that
\[ \frac{1}{d_{r_1}}+\frac{1}{d_{r_2}}+\frac{1}{d_{r_3}} \leq \frac{1}{2}+\frac{1}{2}+\frac{1}{2} = \frac{3}{2},
\]
which contradicts Equality~\eqref{0903-1}.
Therefore, $(d_p,m) \neq (3,9)$.
Summarizing the argument so far, we have shown that if $\delta = 1$, then $G$ is isomorphic to $\Theta(1,2)$.

In what follows, we deal with the case $\delta = 2$.
As before, let $u$ be a vertex of minimum degree, and write $N(u)=\{ p_1, p_2 \}$.
Without loss of generality, we may assume that $d_{p_1} \leq d_{p_2}$.
Applying Corollary~\ref{0803-2}(i) to the vertex $u$, we obtain
\begin{equation} \label{0805-4}
\frac{1}{d_{p_1}} + \frac{1}{d_{p_2}}
= \frac{2}{4} + \frac{3 \cdot 2^2}{4m} = \frac{1}{2} + \frac{3}{m}.
\end{equation}
Since $d_{p_1} \leq d_{p_2}$, we have
\[ \frac{1}{2} + \frac{3}{m} = \frac{1}{d_{p_1}} + \frac{1}{d_{p_2}} \leq \frac{2}{d_{p_1}}, \]
and hence
\[ d_{p_1} \leq \frac{4m}{m+6} = 4 - \frac{24}{m+6} < 4. \]
From $\delta = 2$, it follows that $d_{p_1}$ is either $2$ or $3$.
\begin{adjustwidth}{1em}{1em}
\begin{claim}
We have $d_{p_1} = 2$.
\end{claim}
\begin{proof}[Proof of Claim]
\renewcommand{\qedsymbol}{$\blacklozenge$}
We now assume that $d_{p_1}=3$ and derive a contradiction.
Equality~\eqref{0805-4} becomes
\[ \frac{1}{3} + \frac{1}{d_{p_2}} = \frac{1}{2} + \frac{3}{m}, \]
which can be rewritten as $(d_{p_2} - 6)(m+18) = -108$.
Since $m+18 > 0$, we have $d_{p_2} - 6 < 0$.
Moreover, since $3 = d_{p_1} \leq d_{p_2}$, we have $d_{p_2} \in \{ 3, 4, 5 \}$.
Thus,
\begin{equation} \label{0806-1}
(d_{p_2}, m) = (3, 18), (4, 36), (5, 90).
\end{equation}
On the other hand, applying Corollary~\ref{0803-2}(ii) to the vertices $p_1$ and $p_2$, we obtain
\[ \frac{3 \cdot 3 \cdot d_{p_2}}{4m} = \sum_{z \in N(p_1) \cap N(p_2)} \frac{1}{d_z} \geq \frac{1}{d_u} = \frac{1}{2}, \]
that is,
\[ m \leq \frac{9}{2} d_{p_2}. \]
However, none of the solutions given in Equality~\eqref{0806-1} satisfies this inequality, a contradiction.
\end{proof}
\end{adjustwidth}
From $d_{p_1} = 2$ and Equality~\eqref{0805-4}, we obtain $d_{p_2} = m/3$.
Let $r$ be the neighbor of $p_1$ other than $u$.
Since $p_1$ and $p_2$ belong to the same partite set, they are not adjacent.
In particular, $r \neq p_2$.
The situation established so far is illustrated in Figure~\ref{0806-2}.

\begin{figure}[htb]
\centering
\begin{tikzpicture}
[scale = 0.5,
v/.style = {circle, fill = black, inner sep = 0.6mm},
u/.style = {circle, fill = white, inner sep = 0.0mm}
]

\node[u, label=below:$\TR{2}$] (d1) at (0, 0) {};
\node[u, label=right:$\TR{2}$] (d2) at (-2, -2) {};
\node[u, label=left:$\TR{\frac{m}{3}}$] (d3) at (2, -2) {};
\node[v, label=above:$u$] (1) at (0, 0) {};
\node[v, label=left:$p_1$] (2) at (-2, -2) {};
\node[v, label=right:$p_2$] (3) at (2, -2) {};
\node[v, label=left:$r$] (4) at (-2, -5) {};
\draw[line width = 1pt] (1) to (2);
\draw[line width = 1pt] (1) to (3);
\draw[line width = 1pt] (2) to (4);
\end{tikzpicture}
\caption{The local structure around $u$ at this stage. The red numbers indicate the degrees of the corresponding vertices.} \label{0806-2}
\end{figure}

Applying Corollary~\ref{0803-2}(ii) to the vertices $p_1$ and $p_2$, we obtain
\[ \sum_{z \in N(p_1) \cap N(p_2)} \frac{1}{d_z} = \frac{3 \cdot 2 \cdot \frac{m}{3}}{4m} = \frac{1}{2}. \]
On the other hand, since
\[ \frac{1}{2} = \sum_{z \in N(p_1) \cap N(p_2)} \frac{1}{d_z}
\geq \frac{1}{d_u} = \frac{1}{2}, \]
we must have $N(p_1)\cap N(p_2)= \{ u \}$.
In particular, $r$ is not adjacent to $p_2$.
Let $V_0 = \{u\}, V_1 = \{p_1, p_2 \}, V_2, V_3, \dots$ be the distance partition with respect to $u$.
As $d_{p_1}=2$, every vertex $x \in V_2 \setminus \{r\}$ must be adjacent to $p_2$.
Since $d_{p_2} = \frac{m}{3}$, let $x_1, x_2, \dots, x_{\frac{m}{3}-1}$ be the neighbors of $p_2$ other than $u$.
Figure~\ref{0806-3} extends Figure~\ref{0806-2} by incorporating the additional information obtained so far.

\begin{figure}[htb]
\centering
\begin{tikzpicture}
[scale = 0.5,
v/.style = {circle, fill = black, inner sep = 0.6mm},
u/.style = {circle, fill = white, inner sep = 0.0mm}
]
\node[u] (99) at (3, -5) {$\cdots$};
%
\node[u, label=below:$\TR{2}$] (d1) at (0, 0) {};
\node[u, label=right:$\TR{2}$] (d2) at (-2, -2) {};
\node[u, label=left:$\TR{\frac{m}{3}}$] (d3) at (2, -2) {};
\node[v, label=above:$u$] (1) at (0, 0) {};
\node[v, label=left:$p_1$] (2) at (-2, -2) {};
\node[v, label=right:$p_2$] (3) at (2, -2) {};
\node[v, label=left:$r$] (4) at (-2, -5) {};
\node[v, label=below:$x_1$] (5) at (1, -5) {};
\node[v, label=below:$x_2$] (6) at (2, -5) {};
\node[v, label=below:$x_{\frac{m}{3}-1}$] (7) at (4, -5) {};
\draw[line width = 1pt] (1) to (2);
\draw[line width = 1pt] (1) to (3);
\draw[line width = 1pt] (2) to (4);
\draw[line width = 1pt, dashed] (4) to (3);
\draw[line width = 1pt] (3) to (5);
\draw[line width = 1pt] (3) to (6);
\draw[line width = 1pt] (3) to (7);
\end{tikzpicture}
\caption{The structure determined at this stage.
The dashed line indicates that the two vertices are not adjacent.
The red numbers indicate the degrees of the corresponding vertices.} \label{0806-3}
\end{figure}

Applying Corollary~\ref{0803-2}(i) to the vertex $p_1$, we obtain
\[ \frac{1}{d_r} + \frac{1}{2} = \frac{2}{4} + \frac{3 \cdot 2^2}{4m}, \]
and hence $d_r=\frac{m}{3}$.
Since $d_r = \frac{m}{3}$, let $N(r) \setminus \{p_1\} = \{ y_1, y_2, \dots, y_{\frac{m}{3}-1} \}$.
For each $i \in \{ 1, 2, \dots, \frac{m}{3}-1 \}$, applying Corollary~\ref{0803-2}(ii) to the vertices $y_i$ and $p_1$, we obtain
\[ \frac{1}{\frac{m}{3}} = \frac{3 \cdot 2 \cdot d_{y_i}}{4m}, \]
and hence $d_{y_i} = 2$.
Similarly, for each $i \in \{ 1, 2, \dots, \frac{m}{3}-1 \}$, applying Corollary~\ref{0803-2}(ii) to the vertices $x_i$ and $u$ yields $d_{x_i} = 2$.
Figure~\ref{0807-1} extends Figure~\ref{0806-3} by incorporating the additional information obtained so far.

\begin{figure}[htb]
\centering
\begin{tikzpicture}
[scale = 0.5,
v/.style = {circle, fill = black, inner sep = 0.6mm},
u/.style = {circle, fill = white, inner sep = 0.0mm}
]
\node[u] (99) at (3, -5) {$\cdots$};
\node[u] (97) at (-1, -8) {$\cdots$};
\node[u, label=below:$\TR{2}$] (d1) at (0, 0) {};
\node[u, label=right:$\TR{2}$] (d2) at (-2, -2) {};
\node[u, label=left:$\TR{\frac{m}{3}}$] (d3) at (2, -2) {};
\node[u, label=right:$\TR{\frac{m}{3}}$] (d4) at (-2, -5) {};
\node[u, label=left:$\TR{2}$] (d5) at (1, -5) {};
\node[u, label=left:$\TR{2}$] (d6) at (2.1, -5) {};
\node[u, label=right:$\TR{2}$] (d7) at (4, -5) {};
\node[u, label=left:$\TR{2}$] (d8) at (-3, -8) {};
\node[u, label=left:$\TR{2}$] (d9) at (-2, -8) {};
\node[u, label=right:$\TR{2}$] (d10) at (0, -8) {};
\node[v, label=above:$u$] (1) at (0, 0) {};
\node[v, label=left:$p_1$] (2) at (-2, -2) {};
\node[v, label=right:$p_2$] (3) at (2, -2) {};
\node[v, label=left:$r$] (4) at (-2, -5) {};
\node[v, label=below:$x_1$] (5) at (1, -5) {};
\node[v, label=below:$x_2$] (6) at (2, -5) {};
\node[v, label=below:$x_{\frac{m}{3}-1}$] (7) at (4, -5) {};
\node[v, label=below:$y_1$] (8) at (-3, -8) {};
\node[v, label=below:$y_2$] (9) at (-2, -8) {};
\node[v, label=below:$y_{\frac{m}{3}-1}$] (10) at (0, -8) {};

\draw[line width = 1pt] (1) to (2);
\draw[line width = 1pt] (1) to (3);
\draw[line width = 1pt] (2) to (4);
\draw[line width = 1pt] (3) to (5);
\draw[line width = 1pt] (3) to (6);
\draw[line width = 1pt] (3) to (7);
\draw[line width = 1pt] (4) to (8);
\draw[line width = 1pt] (4) to (9);
\draw[line width = 1pt] (4) to (10);
\end{tikzpicture}
\caption{The structure determined at this stage.
The red numbers indicate the degrees of the corresponding vertices.} \label{0807-1}
\end{figure}

Here, computing the sum of the degrees of the vertices considered so far, we obtain
\[ d_u + d_{p_1} + d_{p_2} + d_r + \sum_{i=1}^{\frac{m}{3}-1} d_{x_i} + \sum_{i=1}^{\frac{m}{3}-1} d_{y_i} = 2m. \]
Hence, by the handshaking lemma, the graph $G$ has no other vertices.
On the other hand, since $d_{x_1} = 2$, the vertex $x_1$ must be adjacent to one of the vertices $y_1, \dots, y_{\frac{m}{3}-1}$.
Without loss of generality, we may assume that $x_1$ is adjacent to $y_1$.
Similarly, we may assume that $x_2$ is adjacent to $y_2$, and repeating this argument shows that $x_i$ is adjacent to $y_i$ for each $i \in \{1, 2, \dots, \frac{m}{3}-1 \}$.
As shown in Figure~\ref{0807-2}, the graph $G$ is isomorphic to the uniform theta graph $\Theta(\frac{m}{3}, 2)$.
Comparing the numbers of vertices, we obtain $2t + 2 = 4+ 2(\frac{m}{3} - 1)$, that is, $\frac{m}{3} = t$, which proves the desired isomorphism.
\end{proof}

\begin{figure}[htb]
\centering
\begin{tikzpicture}
[scale = 0.5,
v/.style = {circle, fill = black, inner sep = 0.6mm},
u/.style = {circle, fill = white, inner sep = 0.0mm}
]
\node[u] (99) at (3, -5) {$\cdots$};
\node[u] (97) at (-1, -8) {$\cdots$};
\node[v, label=above:$u$] (1) at (0, 0) {};
\node[v, label=left:$p_1$] (2) at (-2, -2) {};
\node[v, label=right:$p_2$] (3) at (2, -2) {};
\node[v, label=left:$r$] (4) at (-2, -5) {};
\node[v, label=left:$x_1$] (5) at (1, -5) {};
\node[v, label=below:$x_2$] (6) at (2, -5) {};
\node[v, label=right:$x_{\frac{m}{3}-1}$] (7) at (4, -5) {};
\node[v, label=below:$y_1$] (8) at (-3, -8) {};
\node[v, label=below:$y_2$] (9) at (-2, -8) {};
\node[v, label=below:$y_{\frac{m}{3}-1}$] (10) at (0, -8) {};
%
\draw[line width = 1pt] (1) to (2);
\draw[line width = 1pt] (1) to (3);
\draw[line width = 1pt] (2) to (4);
\draw[line width = 1pt] (3) to (5);
\draw[line width = 1pt] (3) to (6);
\draw[line width = 1pt] (3) to (7);
\draw[line width = 1pt] (4) to (8);
\draw[line width = 1pt] (4) to (9);
\draw[line width = 1pt] (4) to (10);
\draw[line width = 1pt] (5) to (8);
\draw[line width = 1pt] (6) to (9);
\draw[line width = 1pt] (7) to (10);
\end{tikzpicture}
\raisebox{63pt}{$\quad = \quad$}
\begin{tikzpicture}
[scale = 0.5,
v/.style = {circle, fill = black, inner sep = 0.6mm},
u/.style = {circle, fill = white, inner sep = 0.0mm}
]
\node[u] (99) at (-2, -1.8) {$\vdots$};
\node[u] (98) at (2, -1.8) {$\vdots$};
\node[v, label=left:$r$] (1) at (-5, 0) {};
\node[v, label=above:$p_1$] (2) at (-2, 3.5) {};
\node[v, label=above:$y_1$] (3) at (-2, 1.5) {};
\node[v, label=below:$y_{\frac{m}{3}-1}$] (4) at (-2, -3.5) {};
\node[v, label=above:$u$] (5) at (2, 3.5) {};
\node[v, label=above:$x_1$] (6) at (2, 1.5) {};
\node[v, label=below:$x_{\frac{m}{3}-1}$] (7) at (2, -3.5) {};
\node[v, label=right:$p_2$] (8) at (5, 0) {};
\node[v, label=above:$y_2$] (9) at (-2, -0.5) {};
\node[v, label=above:$x_2$] (10) at (2, -0.5) {};
\draw[line width = 1pt] (1) to (2);
\draw[line width = 1pt] (1) to (3);
\draw[line width = 1pt] (1) to (4);
\draw[line width = 1pt] (5) to (2);
\draw[line width = 1pt] (6) to (3);
\draw[line width = 1pt] (7) to (4);
\draw[line width = 1pt] (8) to (5);
\draw[line width = 1pt] (8) to (6);
\draw[line width = 1pt] (8) to (7);
\draw[line width = 1pt] (1) to (9);
\draw[line width = 1pt] (8) to (10);
\draw[line width = 1pt] (9) to (10);
\end{tikzpicture}
\caption{The resulting structure of $G$, which shows that the graph $G$ is isomorphic to the uniform theta graph $\Theta(\frac{m}{3}, 2) = \Theta(t, 2)$.} \label{0807-2}
\end{figure}

To highlight the connection between Theorem~\ref{0903-2} and spectral characterization with respect to the normalized Laplacian matrix, we conclude this paper by presenting the following reformulation of Theorem~\ref{0903-2}.

\begin{cor}
Let $t$ be a positive integer.
The uniform theta graph $\Theta(t,2)$ is determined by its normalized Laplacian spectrum.
\end{cor}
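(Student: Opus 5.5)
The corollary is a reformulation of Theorem~\ref{0903-2}, so the plan is to translate between normalized Laplacian spectra and normalized adjacency spectra, and then close a few gaps that the theorem leaves open.

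First, since $\MC{L}(G) = I - T(G)$, we have $\Spec(\MC{L}(G)) = \{1-\lambda \mid \lambda \in \Spec(T(G))\}$. Hence a graph $G$ is normalized Laplacian cospectral with $\Theta(t,2)$ exactly when $\Spec(T(G)) = \{1^{(1)}, \frac{1}{2}^{(t)}, -\frac{1}{2}^{(t)}, -1^{(1)}\}$. Here we use Corollary~\ref{0820-2} for the spectrum of $\Theta(t,2)$, which corresponds to $\Spec(\MC{L}(\Theta(t,2))) = \{0^{(1)}, \frac{1}{2}^{(t)}, \frac{3}{2}^{(t)}, 2^{(1)}\}$.

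The main obstacle is that Theorem~\ref{0903-2} assumes $G$ is connected, whereas "determined by the spectrum" allows arbitrary (possibly disconnected) cospectral mates. I would dispose of this using the standard fact that the multiplicity of the eigenvalue $0$ of $\MC{L}(G)$ equals the number of connected components of $G$. Since $0$ is simple here, $G$ is connected.

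One subtlety must be addressed: isolated vertices. For an isolated vertex, $D^{-1/2}$ is undefined, so one either adopts the convention that such a vertex contributes eigenvalue $0$ to $\MC{L}$ (via $d_x^{-1/2}:=0$), or restricts attention to graphs without isolated vertices. Under either convention, an isolated vertex would create an extra component, and hence a second eigenvalue $0$ of $\MC{L}$. This is excluded by the simplicity of $0$, so with the stated convention the component count argument still applies. With connectedness established, Theorem~\ref{0903-2} yields $G \cong \Theta(t,2)$, completing the argument.
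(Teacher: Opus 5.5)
Your proposal is correct and follows essentially the paper's route. The paper gives no separate proof and presents the corollary as a direct reformulation of Theorem~\ref{0903-2} via $\MC{L}=I-T$; your connectedness step supplies a point the paper leaves implicit.

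One correction to your isolated-vertex remark. With the paper's definition $\MC{L}=I-D^{-1/2}AD^{-1/2}$ and $d_x^{-1/2}:=0$, an isolated vertex gives a zero row of $T$, so it contributes the eigenvalue $1$, not $0$, to $\MC{L}$. For example, $\Theta(t,2)\cup K_1$ has $0$ as a simple eigenvalue of $\MC{L}$ yet is disconnected. The eigenvalue-$0$ behaviour you describe belongs to the other convention, $D^{-1/2}(D-A)D^{-1/2}$.

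Under the paper's convention, the multiplicity of $0$ counts only components with at least one edge. So exclude isolated vertices by noting that $1\notin\Spec(\MC{L}(\Theta(t,2)))=\{0,\frac12,\frac32,2\}$ (as a set). Simplicity of $0$ then gives connectedness, and Theorem~\ref{0903-2} applies as you say.
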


\section*{Acknowledgements}
S.K. is supported by JSPS KAKENHI (Grant Number JP24K16970).

\section*{Data availability}
This article has no associated data.

\bibliographystyle{plain}
\bibliography{mybibs}

@article{banerjee2017normalized,
  title={On the normalized spectrum of threshold graphs},
  author={Banerjee, A. and Mehatari, R.},
  journal={Linear Algebra and its Applications},
  volume={530},
  pages={288--304},
  year={2017},
  publisher={Elsevier}
}

@article{berman2018family,
  title={A family of graphs that are determined by their normalized {L}aplacian spectra},
  author={Berman, A. and Chen, D. and Chen, Z. and Liang, W. and Zhang, X.},
  journal={Linear Algebra and its Applications},
  volume={548},
  pages={66--76},
  year={2018},
  publisher={Elsevier}
}

@article{bhakta2024grover,
  title={Grover walks on unitary Cayley graphs and integral regular graphs},
  author={Bhakta, K. and Bhattacharjya, B.},
  journal={arXiv preprint arXiv:2405.01020},
  year={2024}
}

@article{bhakta2025periodicity,
  title={Periodicity and perfect state transfer of {G}rover walks on quadratic unitary {C}ayley graphs},
  author={Bhakta, K. and Bhattacharjya, B.},
  journal={Quantum Information Processing},
  volume={24},
  number={8},
  pages={260},
  year={2025},
  publisher={Springer}
}

@article{bhakta2026state,
  title={State transfer in {G}rover walks on unitary and quadratic unitary {C}ayley graphs over finite commutative rings},
  author={Bhakta, K. and Bhattacharjya, B.},
  journal={Discrete Mathematics},
  volume={349},
  number={8},
  pages={115151},
  year={2026},
  publisher={Elsevier}
}

@article{braga2015trees,
  title={Trees with $4$ or $5$ distinct normalized {L}aplacian eigenvalues},
  author={Braga, R. O. and Del-Vecchio, R. R. and Rodrigues, V. M. and Trevisan, V.},
  journal={Linear Algebra and its Applications},
  volume={471},
  pages={615--635},
  year={2015},
  publisher={Elsevier}
}

@article{butler2011construction,
  title={A construction of cospectral graphs for the normalized {L}aplacian},
  author={Butler, S. and Grout, J.},
  journal={The Electronic Journal of Combinatorics},
  volume={18},
  number={1},
  pages={P231},
  year={2011},
  doi={10.37236/718}
}

@article{van2003graphs,
  title={Which graphs are determined by their spectrum?},
  author={Dam, E. R. van and Haemers, W. H.},
  journal={Linear Algebra and its applications},
  volume={373},
  pages={241--272},
  year={2003},
  publisher={Elsevier}
}

@article{van2009developments,
  title={Developments on spectral characterizations of graphs},
  author={Dam, E. R. van and Haemers, W. H.},
  journal={Discrete Mathematics},
  volume={309},
  number={3},
  pages={576--586},
  year={2009},
  publisher={Elsevier}
}

@article{van2011graphs,
  title={Graphs whose normalized {L}aplacian has three eigenvalues},
  author={Dam, E. R. van and Omidi, G. R.},
  journal={Linear algebra and its applications},
  volume={435},
  number={10},
  pages={2560--2569},
  year={2011},
  publisher={Elsevier}
}

@book{godsil2013algebraic,
  title={Algebraic graph theory},
  author={Godsil, C. and Royle, G.},
  volume={207},
  year={2013},
  publisher={Springer Science \& Business Media}
}

@article{gunthard1956zusammenhang,
  title={{Zusammenhang von Graphentheorie und MO-Theorie von Molekeln mit Systemen konjugierter Bindungen}},
  author={G{\"u}nthard, H. H. and Primas, H.},
  journal={Helvetica Chimica Acta},
  volume={39},
  number={6},
  pages={1645--1653},
  year={1956},
  doi={10.1002/hlca.19560390623}
}

@article{higuchi2014spectral,
  title={Spectral and asymptotic properties of {G}rover walks on crystal lattices},
  author={Higuchi, Y. and Konno, N. and Sato, I. and Segawa, E.},
  journal={Journal of Functional Analysis},
  volume={267},
  number={11},
  pages={4197--4235},
  year={2014},
  publisher={Elsevier}
}

@article{higuchi2017periodicity,
  title={Periodicity of the discrete-time quantum walk on a finite graph},
  author={Higuchi, Y. and Konno, N. and Sato, I. and Segawa, E.},
  journal={Interdisciplinary Information Sciences},
  volume={23},
  number={1},
  pages={75--86},
  year={2017},
  publisher={The Editorial Committee of the Interdisciplinary Information Sciences}
}

@article{hoffman1963polynomial,
  title={On the polynomial of a graph},
  author={Hoffman, A. J.},
  journal={The American Mathematical Monthly},
  volume={70},
  number={1},
  pages={30--36},
  year={1963},
  publisher={Taylor \& Francis}
}

@article{huang2019graphs,
  title={On graphs with three or four distinct normalized {L}aplacian eigenvalues},
  author={Huang, X. and Huang, Q.},
  journal={Algebra Colloquium},
  volume={26},
  number={01},
  pages={65--82},
  year={2019}
}

@article{jost2023petals,
  title={Petals and books: {T}he largest {L}aplacian spectral gap from 1},
  author={Jost, J. and Mulas, R. and Zhang, D.},
  journal={Journal of Graph Theory},
  volume={104},
  number={4},
  pages={727--756},
  year={2023},
  publisher={Wiley Online Library}
}

@article{kubota2018generalizedBT,
  title={Periodicity of {G}rover walks on generalized {B}ethe trees},
  author={Kubota, S. and Segawa, E. and Taniguchi, T. and Yoshie, Y.},
  journal={Linear Algebra and its Applications},
  volume={554},
  pages={371--391},
  year={2018},
  publisher={Elsevier}
}

@article{kubota2021periodicity,
  title={Periodicity of quantum walks defined by mixed paths and mixed cycles},
  author={Kubota, S. and Sekido, H. and Yata, H.},
  journal={Linear Algebra and its Applications},
  volume={630},
  pages={15--38},
  year={2021},
  publisher={Elsevier}
}

@article{kubota2021quantum,
  title={Quantum walks defined by digraphs and generalized {H}ermitian adjacency matrices},
  author={Kubota, S. and Segawa, E. and Taniguchi, T.},
  journal={Quantum Information Processing},
  volume={20},
  number={3},
  pages={95},
  year={2021},
  publisher={Springer}
}

@article{kubota2022bipartite,
  title={Periodicity of {G}rover walks on bipartite regular graphs with at most five distinct eigenvalues},
  author={Kubota, S.},
  journal={Linear Algebra and its Applications},
  volume={654},
  pages={125--142},
  year={2022},
  publisher={Elsevier}
}

@article{kubota2025regular,
  title={Regular graphs to induce even periodic {G}rover walks},
  author={Kubota, S. and Sekido, H. and Yoshino, K.},
  journal={Discrete Mathematics},
  volume={348},
  number={3},
  pages={114345},
  year={2025},
  publisher={Elsevier}
}

@article{kubota2026entanglement,
  title={Entanglement entropy in two-particle {G}rover walks on graphs},
  author={Kubota, S. and Matsubara, H. and Segawa, E.},
  journal={arXiv preprint arXiv:2607.09066},
  year={2026}
}

@article{li2014bounds,
  title={Bounds on normalized {L}aplacian eigenvalues of graphs},
  author={Li, J. and Guo, J. and Shiu, W. C.},
  journal={Journal of Inequalities and Applications},
  volume={2014},
  number={1},
  pages={316},
  year={2014},
  publisher={Springer}
}

@article{li2020split,
  title={On split graphs with three or four distinct (normalized) {L}aplacian eigenvalues},
  author={Li, S. and Sun, W.},
  journal={Journal of Combinatorial Designs},
  volume={28},
  number={11},
  pages={763--782},
  year={2020},
  publisher={Wiley Online Library}
}

@article{panda2021order,
  title={Order from chaos in quantum walks on cyclic graphs},
  author={Panda, A. and Benjamin, C.},
  journal={Physical Review A},
  volume={104},
  number={1},
  pages={012204},
  year={2021},
  publisher={APS}
}

@article{rath2026quantum,
  title={Quantum cryptography compatible with noisy intermediate-scale quantum devices based on {P}arrondo dynamics in discrete-time quantum walks},
  author={Rath, A. and Panda, D. K. and Benjamin, C.},
  journal={Physical Review A},
  volume={114},
  number={1},
  pages={012615},
  year={2026},
  publisher={APS}
}

@article{sason2025spectral,
  title={On spectral graph determination},
  author={Sason, I. and Krupnik, N. and Hamud, S. and Berman, A.},
  journal={Mathematics},
  volume={13},
  number={4},
  pages={549},
  year={2025},
  publisher={MDPI}
}

@article{sun2019second,
  title={On the second largest normalized {L}aplacian eigenvalue of graphs},
  author={Sun, S. and Das, K. Ch.},
  journal={Applied Mathematics and Computation},
  volume={348},
  pages={531--541},
  year={2019},
  publisher={Elsevier}
}

@article{sun2020normalized,
  title={Normalized {L}aplacian spectrum of complete multipartite graphs},
  author={Sun, S. and Das, K. C.},
  journal={Discrete Applied Mathematics},
  volume={284},
  pages={234--245},
  year={2020},
  publisher={Elsevier}
}

@article{sun2024characterization,
  title={Characterization of Graphs with Some Normalized {L}aplacian Eigenvalue Having Multiplicity $n-4$},
  author={Sun, S.},
  journal={Communications in Mathematics and Statistics},
  pages={1--11},
  year={2024},
  publisher={Springer}
}

@article{sun2026spectral,
  title={On the spectral characterization of graphs with respect to the normalized {L}aplacian},
  author={Sun, S. and Sun, X. and Das, K. C.},
  journal={Discrete Mathematics},
  volume={349},
  number={6},
  pages={115034},
  year={2026},
  publisher={Elsevier}
}

@article{tian2021full,
  title={Full characterization of graphs having certain normalized {L}aplacian eigenvalue of multiplicity $n-3$},
  author={Tian, F. and Wang, Y.},
  journal={Linear Algebra and its Applications},
  volume={630},
  pages={69--83},
  year={2021},
  publisher={Elsevier}
}

@article{yoshie2017characterizations,
  title={Characterizations of graphs to induce periodic {G}rover walk},
  author={Yoshie, Y.},
  journal={Yokohama Mathematical Journal},
  volume={63},
  pages={9--23},
  year={2017}
}

@article{yoshie2019periodicities,
  title={Periodicities of {G}rover walks on distance-regular graphs},
  author={Yoshie, Y.},
  journal={Graphs and Combinatorics},
  volume={35},
  number={6},
  pages={1305--1321},
  year={2019},
  publisher={Springer}
}

@article{yoshie2023odd,
  title={Odd-periodic {G}rover Walks},
  author={Yoshie, Y.},
  journal={Quantum Information Processing},
  volume={22},
  number={8},
  pages={316},
  year={2023},
  publisher={Springer}
}

@article{zhao2025solving,
  title={Solving the problem about the second largest normalized {L}aplacian eigenvalue},
  author={Zhao, X. and You, L.},
  journal={Discrete Applied Mathematics},
  volume={367},
  pages={8--21},
  year={2025},
  publisher={Elsevier}
}

\end{document}